\RequirePackage{fix-cm}
\documentclass[3p,preprint]{elsarticle}
\usepackage{graphicx}
\usepackage{amsmath}
\usepackage{amsfonts}
\usepackage{csquotes}
\newtheorem{theorem}{Theorem}[section]
\newtheorem{corollary}[theorem]{Corollary}
\newtheorem{lemma}[theorem]{Lemma}
\newtheorem{proposition}[theorem]{Proposition}
\newtheorem{remark}[theorem]{Remark}
\newenvironment{proof}[1][Proof]{\noindent\textbf{#1.} }{\ \rule{0.5em}{0.5em}}

\numberwithin{equation}{section}

\newcommand{\RR}{\mathbb{R}}
\newcommand{\dd}{\mathrm{d}}
\biboptions{sort&compress}

\makeatletter
\newcommand{\daggerref}{%
  \gdef\@fnmark{\ensuremath{\dagger}}%
}
\makeatother

\begin{document}

\begin{frontmatter}

\title{Conditioning and Singular Value Distribution of Jacobi Weighted Histopolation Matrices}

\author[address-Pau]{Allal Guessab\daggerref}
\ead{allal.guessab@univ-pau.fr}

\author[address-CS]{Federico Nudo}
\ead{federico.nudo@unical.it}

\author[address-Como,address-Como2]{Stefano Serra-Capizzano}
\ead{s.serracapizzano@uninsubria.it}

\address[address-Pau]{Laboratoire de Math\'{e}matiques et de leurs Applications, UMR CNRS 5142,\\ Universit\'{e} de Pau et des Pays de l'Adour (UPPA), 64000 Pau, France}
\address[address-CS]{Department of Mathematics and Computer Science,\\ University of Calabria, Rende (CS), Italy}
\address[address-Como]{University of Insubria, Department of Science and High Technology,\\ via Valleggio 11, 22100 Como, Italy}
\address[address-Como2]{University of Uppsala, Department of Information Technology,\\ hus 10, L\"{a}gerhyddsv\"{a}gen 1, 75105 Uppsala (Sweden)}

\nonumnote{\ensuremath{\dagger}\ Retired from Universit\'{e} de Pau et des Pays de l'Adour on 1 September 2025; this work was carried out during retirement; currently an independent researcher.}

\begin{abstract}
In this paper, we first study the singular value distribution of the unscaled matrices arising from Jacobi weighted histopolation. For the Jacobi weight with parameters $(\alpha,\beta)$, and using the Jacobi basis with parameters $(\alpha+1,\beta+1)$, we previously proved that, for $\alpha,\beta>1/2$ and quasi-uniform meshes, the growth of these matrices in the sense of singular value distribution is slower than any positive power of the logarithm of the matrix size. However, the numerical behavior of the unscaled matrices suggested a nontrivial singular value symbol, which we identify explicitly in this paper. Starting from the standard asymptotic formula for Jacobi polynomials, we derive an asymptotic representation of the weighted cell averages defining the entries of the histopolation matrices. This representation allows us to analyze the associated Gram sequence by means of GLT theory. We identify its GLT symbol and, consequently, we obtain the singular value distribution of the histopolation matrices for any $\alpha,\beta>-1$. We also characterize the set $\mathcal Z$ where the resulting symbol vanishes and compute its measure, showing that this set depends only on the mesh, while the Jacobi parameters affect the values of the symbol on its support. When this measure is positive, the singular value distribution implies the discrete asymptotic ill-posedness of the problem, while a comparison with the Toeplitz setting suggests exponential growth of the spectral condition number. When the measure of $\mathcal Z$ is zero, the problem is well-posed and the observed conditioning is algebraic in the matrix size.  Finally, numerical experiments supporting the theoretical findings are reported and discussed, while conclusions are given in the final section of the current work together with a few open problems.  
\end{abstract}

\begin{keyword}
Conditioning\sep Singular value distribution\sep Histopolation method\sep Jacobi polynomials\sep GLT sequences
\end{keyword}

\end{frontmatter}

\section{Introduction}\label{sec:intro}
In many applications, the available data are given by integral quantities rather than by point evaluations. This naturally leads to reconstruction procedures based on integral functionals and, in particular, to histopolation methods. In the weighted case, the data are defined through weighted averages and the corresponding reconstruction problem depends on the choice of both the weight and the polynomial basis. Several weighted histopolation methods have been recently considered in~\cite{BruniErbFekete,DellAccio:2026:AGP,DellAccio:2026:NAM,Nudo:2026:FRU,Guessab:2026:QWH}, while related polynomial histopolation problems have been studied in~\cite{Bruno:2024:PIO,Bruni:2025:PHO,Bruno:2025:OTC}. In~\cite{Guessab:2026:SJW}, we considered a weighted histopolation problem on $[-1,1]$ associated with the Jacobi weight with parameters $\alpha,\beta>-1$. Starting from weighted cell averages, we expressed the polynomial histopolant in the Jacobi basis with parameters $(\alpha+1,\beta+1)$ and derived an exact factorization of the corresponding histopolation matrix. This representation allowed us to perform the asymptotic analysis
by means of GLT theory and to show that, for $\alpha,\beta>1/2$
and quasi-uniform meshes, the growth of the histopolation matrices
in the sense of singular value distribution is slower than any positive
power of the logarithm of the matrix size. The numerical experiments for the unscaled matrices provided further information on the sequence and suggested a nontrivial limiting singular value distribution, with a sharp decay in the final part of the spectrum. This observation is the main motivation for the present paper. We therefore consider the unscaled matrix sequence and explicitly identify its singular value symbol. The analysis starts from the classical asymptotic formula for Jacobi polynomials. When the polynomial degree grows proportionally to the matrix size, the classical asymptotic formula for Jacobi polynomials yields an asymptotic representation of the weighted cell averages. This representation allows us to study the associated Gram sequence within the GLT framework and to identify its symbol explicitly.
 As a consequence, we obtain the singular value distribution of the unscaled histopolation matrices. The resulting description also explains the sharp decay observed in the final part of the spectra. More specifically, when the singular value symbol is not sparsely
vanishing, i.e., when the set of its zeros $\mathcal Z$ has positive measure, two
closely related features become relevant:
\begin{itemize}
\item[-] the singular value distribution implies the discrete asymptotic
ill-posedness of the problem;
\item[-] the spectral condition number exhibits numerically an
exponential trend as the matrix size increases.
\end{itemize}
The first property follows directly from the singular value distribution,
whereas the second is motivated by the spectral behavior associated with
the extended zero set, as in the Toeplitz setting \cite{Capizzano:2000:HBC,Tilli:2003:UBO}, and is investigated numerically in
Section~\ref{sec:numerical}. Conversely, when the measure of $\mathcal Z$ is zero, the problem is well-posed 
and the observed spectral conditioning is algebraic in the matrix size.
Hence, from a numerical linear algebra perspective, it remains to understand how to handle the histopolation problem
efficiently and accurately for large problem sizes, in the difficult case of $\mathcal Z$ having positive measure. 

The paper is organized as follows. In Section~\ref{sec:setting}, we introduce the weighted histopolation matrices and state the main result. In Section~\ref{sec:entries}, we derive the asymptotic representation of the matrix entries used in the spectral analysis. Section~\ref{sec:gram} is devoted to the associated Gram sequence, whose GLT symbol is identified explicitly and yields the singular value distribution of the unscaled histopolation matrices. In Section~\ref{sec:further}, we discuss further properties of the zero
set of the symbol and its dependence on the mesh.
Section~\ref{sec:numerical} presents numerical experiments for different
meshes that support the theoretical results.
Finally, we summarize the main results in Section~\ref{sec:conclusions}.

\section{Jacobi weighted histopolation matrices and the main result}\label{sec:setting}

Let $\alpha,\beta>-1$ and consider the Jacobi weight
\begin{equation*}
\omega_{\alpha,\beta}(x)=(1-x)^\alpha(1+x)^\beta,
\quad x\in[-1,1].
\end{equation*}
Let $g\in C^3([0,1])$ satisfy
\begin{equation}\label{eq:g-ass}
g(0)=-1,\quad g(1)=1,\quad g'(y)>0,
\quad y\in(0,1).
\end{equation}
For $N\ge1$, we define the node set
\begin{equation}\label{eq:nodes}
X_N=\left\{x_0,\dots,x_N\right\}, \quad \text{where}\quad x_i=g\left(\frac{i}{N}\right),
\quad i=0,\ldots,N,
\end{equation}
and the subintervals
\[
s_i=\left[x_{i-1},x_i\right],\quad
h_i=x_i-x_{i-1},
\quad i=1,\ldots,N.
\]
We consider the Jacobi basis
\begin{equation*}
 \varphi_{j-1}(x)=P_{j-1}^{(\alpha+1,\beta+1)}(x),
 \quad j=1,\ldots,N.
\end{equation*}
The corresponding weighted histopolation matrix $H_N\in\RR^{N\times N}$ is defined by
\begin{equation}\label{eq:HN}
 [H_N]_{i,j}
 =
 \frac{1}{h_i}
 \int_{x_{i-1}}^{x_i}
 P_{j-1}^{(\alpha+1,\beta+1)}(x)
 \omega_{\alpha,\beta}(x)\dd x,
 \quad i,j=1,\ldots,N.
\end{equation}
For any $N$, the matrix $H_N$ is nonsingular; see \cite[Thm. 2.3]{Guessab:2026:SJW}. To state the main result, we define
\begin{equation}\label{eq:vartheta-q}
 \vartheta(y)=\arccos(g(y)),\quad
 q(y)=-\vartheta'(y)=\frac{g'(y)}{\sqrt{1-(g(y))^2}},
 \quad y\in(0,1),
\end{equation}
and
\begin{equation}\label{chi:eq}
    \chi(t)=
\begin{cases}
\dfrac{\sin t}{t}, & t\neq 0,\\[1ex]
1, & t=0.
\end{cases}
\end{equation}
We then define, for a.e. $(y,\theta)\in(0,1)\times[-\pi,\pi]$,
the nonnegative measurable function
\begin{equation}\label{eq:kappa-main}
\kappa_{\alpha,\beta,g}(y,\theta)
=
2^{2\alpha+2\beta-1}
\left(\sin\left(\frac{\vartheta(y)}{2}\right)\right)^{2\alpha-3}
\left(\cos\left(\frac{\vartheta(y)}{2}\right)\right)^{2\beta-3}
\sum_{\substack{k\in\mathbb Z\\
0<|\theta+2k\pi|<q(y)}}
\frac{
\left[\chi\left(\dfrac{|\theta+2k\pi|}{2}\right)\right]^2
}{
|\theta+2k\pi|
}.
\end{equation}
We recall that, for a sequence of matrices
\[
\left\{A_N\right\}_N,\quad A_N\in\mathbb{C}^{N\times N},
\]
and a measurable function $f$ on $[0,1]\times[-\pi,\pi]$, the notation
\[
\left\{A_N\right\}_N\sim_\sigma f
\]
means that, for any $F\in C_c(\RR)$, we have
\[
\lim_{N\to\infty}\frac{1}{N}\sum_{j=1}^N
F\left(\sigma_j\left(A_N\right)\right)
=
\frac{1}{2\pi}
\int_0^1\int_{-\pi}^{\pi}
F\left(|f(y,\theta)|\right)\dd\theta\dd y.
\]
For the results proved in this paper, we only need the standard GLT notation, the basic algebraic properties of GLT sequences, and the approximating class of sequences as presented in~\cite{Tilli:1998:LTS, Serra:2003:GLT, Garoni:2017:GLT1,Garoni:2018:GLT2,Serra:2006:TGC,Barbarino:2020:BGL2}.

\begin{theorem}\label{thm:main}
Let $\alpha,\beta>-1$, and assume that $g$ satisfies~\eqref{eq:g-ass}. Then
\begin{equation}\label{eq:main-sv}
 \left\{H_N\right\}_N\sim_\sigma \sqrt{\kappa_{\alpha,\beta,g}}.
\end{equation}
Moreover, the Gram sequence satisfies
\begin{equation}\label{eq:gram-glt-main}
 \left\{H_NH_N^\top\right\}_N\sim_{\mathrm{GLT}}\kappa_{\alpha,\beta,g}.
\end{equation}
\end{theorem}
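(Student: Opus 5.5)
The plan is to prove the GLT statement \eqref{eq:gram-glt-main} first and deduce \eqref{eq:main-sv} from it. The matrix $H_NH_N^\top$ is real symmetric, so a GLT symbol yields its eigenvalue distribution. Its eigenvalues are $\sigma_j(H_N)^2$, which gives $\{H_N\}_N\sim_\sigma\sqrt{\kappa_{\alpha,\beta,g}}$ once $F(\cdot)$ is replaced by $F(\sqrt{\cdot})$. For \eqref{eq:gram-glt-main} I would build an a.c.s. of $\{H_NH_N^\top\}_N$ made of sequences of the form $D_N(a)T_N(f)D_N(a)$ plus perturbations of small norm or rank $o(N)$. Here $D_N(a)$ is a diagonal sampling matrix and $T_N(f)$ a Toeplitz matrix. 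The key steps are as follows.

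\textbf{Step 1: asymptotics of the entries.} Write $x=\cos\theta$ and use the classical Darboux formula for $P_n^{(\alpha+1,\beta+1)}(\cos\theta)$. It is uniform for $\theta\in[\varepsilon,\pi-\varepsilon]$ and $n\ge n_0$:
\[
P_n^{(\alpha+1,\beta+1)}(\cos\theta)=\frac{\cos\left(\left(n+\tfrac{\alpha+\beta+3}{2}\right)\theta-\tfrac{(2\alpha+3)\pi}{4}\right)+O(n^{-1})}{\sqrt{\pi n}\,\left(\sin\tfrac\theta2\right)^{\alpha+3/2}\left(\cos\tfrac\theta2\right)^{\beta+3/2}}.
\]
Multiply by $\omega_{\alpha,\beta}(\cos\theta)\sin\theta=2^{\alpha+\beta+1}(\sin\frac\theta2)^{2\alpha+1}(\cos\frac\theta2)^{2\beta+1}$. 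Then average over the cell $s_i$, which in the $\theta$ variable has length $\delta_i=q(y_i)/N+O(N^{-2})$ around $\vartheta(y_i)$; this uses $g\in C^3$. The result is
\[
[H_N]_{i,j}\approx \frac{2^{\alpha+\beta}}{\sqrt{\pi j}}\,\frac{(\sin\tfrac{\vartheta_i}{2})^{\alpha-3/2}(\cos\tfrac{\vartheta_i}{2})^{\beta-3/2}}{\sqrt{q(y_i)}}\cdot q(y_i)\,\cos\left(j\vartheta_i+c\right)\chi\left(\frac{j\delta_i}{2}\right).
\]
The factor $\chi$ comes from averaging a cosine over an interval. I still have to track exactly how the constant $2^{2\alpha+2\beta-1}$ and the power of $q$ come out; they must match \eqref{eq:kappa-main}.

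\textbf{Step 2: the Gram entries.} Compute $[H_NH_N^\top]_{i,l}=\sum_{j}[H_N]_{i,j}[H_N]_{l,j}$ and split $\cos a\cos b=\frac12\cos(a-b)+\frac12\cos(a+b)$. In the difference term, $\vartheta_i-\vartheta_l\approx q(y_i)(l-i)/N$ for $|i-l|$ bounded, and $\frac1N\sum_j$ with $t=j/N$ becomes a Riemann sum. After the substitution $s=tq(y_i)$ this gives the coefficient
\[
C(y_i)\int_0^{q(y_i)}\cos\big(s(i-l)\big)\chi(s/2)^2\frac{\dd s}{s}.
\]
This is exactly the $(i-l)$-th Fourier coefficient of the $2\pi$-periodization $\sum_k \mathbf 1_{\{|\theta+2k\pi|<q\}}\chi(|\theta+2k\pi|/2)^2/|\theta+2k\pi|$, folded to $[-\pi,\pi]$. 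This produces $\kappa_{\alpha,\beta,g}(y_i,\theta)$. Thus locally the matrix looks like $D_N(\sqrt{C})\,T_N(\kappa^{y}_{\cdot})\,D_N(\sqrt{C})$. To make it a genuine GLT object, I would freeze $q$ on blocks of $y$ of size $1/m$, or approximate by a finite sum of products $a_r(y)f_r(\theta)$. The a.c.s. parameter $m$ also truncates the Fourier series, which is necessary because the symbol has jumps at $|\theta|=q$ and a $1/|\theta|$ singularity at $\theta=0$. That singularity is integrable but unbounded, so the truncation has to be done in an $L^1$/a.c.s. sense, using the Frobenius-norm-plus-rank splitting of the Szeg\H{o}-type theory.

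\textbf{Step 3: negligible parts.} Several contributions must be shown to give a zero-distributed sequence:
\begin{itemize}
\item the sum term $\cos(j(\vartheta_i+\vartheta_l)+2c)$, which is Hankel-like and oscillatory (Hankel-type sequences have zero GLT symbol);
\item the Darboux remainders;
\item the columns with $j\le \varepsilon N$ and the rows with $y_i\notin[\varepsilon,1-\varepsilon]$, where the uniform asymptotics fails;
\item pairs $(i,l)$ far apart, which produce the Fourier tail.
\end{itemize}
The rows and columns near the boundary have rank $O(\varepsilon N)$. For the others I would bound the Frobenius norm by $o(N)$, or by $c(\varepsilon)N$ with $c(\varepsilon)\to0$, and then let $\varepsilon\to0$ through the a.c.s. machinery.

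The main obstacle is this last step for general $\alpha,\beta>-1$. The entries near the endpoints $x=\pm1$ and for small $j$ are not uniformly small; for negative parameters the weight even blows up there. So the estimates cannot be in operator norm. I would first try to control them by low-rank plus small-Frobenius decompositions, using the Mehler–Heine regime near the endpoints only to bound norms of the removed blocks. Controlling the $1/j$ singularity, i.e.\ $\theta\to0$ in the symbol, uniformly in $N$ is the delicate point.
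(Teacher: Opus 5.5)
Your outline follows the paper's architecture: Darboux asymptotics of the cell averages, the split $\cos a\cos b=\tfrac12\cos(a-b)+\tfrac12\cos(a+b)$, Riemann sums along diagonals producing Fourier coefficients, a banded variable-coefficient approximation, rank cutoffs, a.c.s.\ closure, and $F(\sqrt{\cdot})$ at the end. There is, however, a genuine error at exactly the point you flag as the main obstacle.

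\textbf{The singularity is not integrable.} For every $y\in(0,1)$ the $k=0$ term gives $\kappa_{\alpha,\beta,g}(y,\theta)\ge c(y)/|\theta|$ for small $|\theta|$, so $\kappa_{\alpha,\beta,g}(y,\cdot)\notin L^1$. Consequently the Step~2 coefficients $\int_0^{q(y_i)}\cos(s(i-l))\chi(s/2)^2\,\dd s/s$ diverge for every $i-l$. Consistently with this, the interior diagonal entries of $H_NH_N^\top$ grow like $\log N$. There is no Fourier series of $\kappa_{\alpha,\beta,g}$ to truncate ``in an $L^1$ sense''.

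\textbf{The remedy.} This is what the paper does: keep the column cutoff $\eta_j\ge\varepsilon$ inside the symbol computation.
\begin{itemize}
\item The coefficients $c_r(y)=\tfrac12\int_\varepsilon^1 A_{\alpha,\beta,g}(y,\eta)^2\cos(rq(y)\eta)\,\dd\eta$ are then finite.
\item The cut symbol $\kappa_\varepsilon$, whose sum runs over $\varepsilon q(y)\le|\theta+2k\pi|\le q(y)$, is bounded. Its Fourier partial sums converge in $L^2$; the jumps at $|\theta|=q$ are harmless.
\item Then let $\varepsilon\to0$. Discarding rows with $y_i\notin[\varepsilon,1-\varepsilon]$ and columns with $\eta_j<\varepsilon$ changes $H_N$ by rank at most $3\lceil\varepsilon N\rceil$, hence $H_NH_N^\top$ by rank at most $6\lceil\varepsilon N\rceil$. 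No norm control of the removed blocks is needed.
\item The a.c.s.\ closure only requires $\kappa_\varepsilon\to\kappa_{\alpha,\beta,g}$ in measure. This holds because the convergence is monotone a.e.\ to an a.e.-finite function.
\end{itemize}
So uniform-in-$N$ control of the $1/j$ singularity, and Mehler--Heine bounds near $x=\pm1$ (also for negative $\alpha,\beta$), are never needed.

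\textbf{Missing off-diagonal decay.} You do not say how ``pairs far apart'' become negligible. The paper gets this by Abel summation, in three steps.
\begin{itemize}
\item Since $q\ge c>0$ on the cut interval, $|1-e^{\mathrm{i}(\vartheta(y_i)-\vartheta(y_k))}|\ge c|i-k|/N$. Hence the partial sums of $e^{\mathrm{i}\ell(\vartheta(y_i)-\vartheta(y_k))}$ are $O(N/|i-k|)$.
\item The map $j\mapsto A(y_i,\eta_j)A(y_k,\eta_j)$ has bounded variation on $[\varepsilon,1]$; this again needs the cutoff.
\item Together these give $|[G_N]_{i,k}|\le C\,(1/(1+|i-k|)+1/N)$, and therefore $\frac1N\|G_N-G_N^{(R)}\|_F^2\le C\sum_{r>R}r^{-2}$.
\end{itemize}
The a.c.s.\ condition is on the squared Frobenius norm, $\|\cdot\|_F^2\le c(R)N$ with $c(R)\to0$. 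A bound $\|\cdot\|_F=o(N)$, as you wrote, would not suffice.

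\textbf{The sum term.} The same Abel summation, using $\vartheta(y_i)+\vartheta(y_k)\in[2c,2\pi-2c]$, shows the sum term is $O(1/N)$ entrywise. No Hankel theory is needed, and the matrix is not Hankel anyway. Freezing $q$ on blocks is also unnecessary: diagonal sampling times the $r$-th shift is directly GLT with symbol $c_r(y)e^{\mathrm{i}r\theta}$.

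\textbf{The $q$ power in your entry formula.} Your formula carries a spurious factor $\sqrt{q(y_i)}$. The $\theta$-length of the cell divided by $h_i$ is $1/\sin\vartheta(y_i)$, which exactly cancels the Jacobian $\sin\theta$. Hence $[H_N]_{i,j}=N^{-1/2}A_{\alpha,\beta,g}(y_i,\eta_j)\cos(j\vartheta(y_i)+\phi_{\alpha,\beta}(y_i))+O(N^{-3/2})$, with no power of $q$ outside $\chi$. With your factor the symbol would be off by $q(y)$.
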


The proof of Theorem~\ref{thm:main} relies on several preliminary results, which are established in the following sections. Before turning to the proof, we briefly describe the structure of the symbol in~\eqref{eq:kappa-main}. The formula shows separately the role of the mesh and of the Jacobi parameters. The dependence on the mesh is described by $g(y)$, or equivalently by $\vartheta(y)$ and by $q(y)$. The condition
\[
0<|\theta+2k\pi|<q(y)
\]
determines the region where the symbol is nonzero, while the Jacobi parameters affect its values on this region.

\begin{corollary}\label{cor:zero-set}
Under the assumptions of Theorem~\ref{thm:main}, the set of zeros of
$\kappa_{\alpha,\beta,g}$ is
\begin{equation}\label{eq:zero-set}
\mathcal Z
=
\left\{(y,\theta)\in(0,1)\times[-\pi,\pi]\,:\,
|\theta|\ge q(y)\right\},
\end{equation}
up to a set of measure zero.
Moreover
\begin{equation}\label{eq:zero-mu}
\frac{|\mathcal Z|}{2\pi}
=\int_0^1
\left(1-\frac{q(y)}{\pi}\right)_+
\dd y.
\end{equation}
\end{corollary}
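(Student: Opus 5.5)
The plan is to read off the zero set directly from the explicit formula \eqref{eq:kappa-main}; Theorem~\ref{thm:main} enters only through the definition of $\kappa_{\alpha,\beta,g}$.

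\textbf{The prefactor.} For $y\in(0,1)$ we have $g(y)\in(-1,1)$, so $\vartheta(y)\in(0,\pi)$. Hence $\sin(\vartheta(y)/2)>0$ and $\cos(\vartheta(y)/2)>0$, and the prefactor $2^{2\alpha+2\beta-1}(\sin(\vartheta/2))^{2\alpha-3}(\cos(\vartheta/2))^{2\beta-3}$ is finite and strictly positive for every $\alpha,\beta>-1$. So $\kappa_{\alpha,\beta,g}(y,\theta)=0$ exactly when the sum over $k$ vanishes.

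\textbf{The sum.} Each term $[\chi(|t|/2)]^2/|t|$, with $t=\theta+2k\pi$, is nonnegative. It vanishes only when $\sin(|t|/2)=0$, that is, when $t\in 2\pi\mathbb Z\setminus\{0\}$. Since $\theta$ is then a multiple of $2\pi$ in $[-\pi,\pi]$, this can only happen for $\theta=0$, a null set. So, off a null set, the sum is zero if and only if its index set $\{k\in\mathbb Z: 0<|\theta+2k\pi|<q(y)\}$ is empty.

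Now take $\theta\in[-\pi,\pi]$ and observe that $\min_{k}|\theta+2k\pi|=|\theta|$, attained at $k=0$. Indeed, for $k\neq0$ we have $|\theta+2k\pi|\ge 2\pi-|\theta|\ge\pi\ge|\theta|$. Consequently:
\begin{itemize}
\item if $|\theta|<q(y)$ and $\theta\neq0$, then $k=0$ belongs to the index set, so $\kappa_{\alpha,\beta,g}>0$;
\item if $|\theta|\ge q(y)$, then every $k$ gives $|\theta+2k\pi|\ge|\theta|\ge q(y)$, the index set is empty, and $\kappa_{\alpha,\beta,g}=0$.
\end{itemize}
Discarding the null set $\{\theta=0\}$, this proves \eqref{eq:zero-set}. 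Note that $q(y)>0$ on $(0,1)$ because $g'>0$ and $|g|<1$.

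\textbf{The measure.} By Fubini, for each fixed $y$ the $\theta$-section of $\mathcal Z$ is $\{\theta\in[-\pi,\pi]: |\theta|\ge q(y)\}$. Its length is $2(\pi-q(y))$ when $q(y)<\pi$ and $0$ otherwise, i.e.\ $2\pi(1-q(y)/\pi)_+$. Measurability of $\mathcal Z$ holds because $q$ is continuous on $(0,1)$. Integrating over $y\in(0,1)$ and dividing by $2\pi$ gives \eqref{eq:zero-mu}.

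\textbf{Main obstacle.} The only delicate point is the bookkeeping of null sets. The term $k=0$ is excluded from the sum when $\theta=0$, and zeros of $\chi$ occur only at multiples of $2\pi$. Both issues concern only the line $\theta=0$, which has measure zero, so the identification holds up to a set of measure zero, as claimed.
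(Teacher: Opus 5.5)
Your proof is correct and follows essentially the same route as the paper. You show the prefactor and every summand are strictly positive away from $\theta=0$, use $\min_k|\theta+2k\pi|=|\theta|$ (which you justify explicitly, whereas the paper only asserts it), and then compute the measure of $\mathcal Z$ section by section in $y$. One cosmetic slip: in your last paragraph, the zeros at $t\in 2\pi\mathbb Z\setminus\{0\}$ belong to $\chi(|t|/2)$, not to $\chi$ itself, as you had correctly stated earlier.
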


\begin{proof}
Fix $y\in(0,1)$ and $\theta\in[-\pi,\pi]\setminus\{0\}$. Since, for any
$k\in\mathbb Z$, we have
\[
\sin\left(\frac{\theta+2k\pi}{2}\right)
=
(-1)^k\sin\left(\frac{\theta}{2}\right),
\]
it follows from~\eqref{chi:eq} that
\[
\left[\chi\left(\frac{\theta+2k\pi}{2}\right)\right]^2>0,
\quad k\in\mathbb Z.
\]
Moreover, $g(y)\in(-1,1)$ for $y\in(0,1)$, and hence
\[
\vartheta(y)=\arccos(g(y))\in(0,\pi).
\]
Therefore, we have
\[
\sin\left(\frac{\vartheta(y)}{2}\right)>0,
\quad
\cos\left(\frac{\vartheta(y)}{2}\right)>0.
\]
Hence, for $y\in(0,1)$ and
$\theta\in[-\pi,\pi]\setminus\{0\}$, all the terms in
\eqref{eq:kappa-main} are strictly positive whenever the sum is
nonempty. It follows that
$\kappa_{\alpha,\beta,g}(y,\theta)>0$ if and only if
\[
\left\{
k\in\mathbb Z\,:\,
0<|\theta+2k\pi|<q(y)
\right\}\neq \emptyset.
\]
For $\theta\in[-\pi,\pi]\setminus\{0\}$, we have
\[
\min_{k\in\mathbb Z}|\theta+2k\pi|=|\theta|.
\]
Hence, the above set is nonempty if and only if $|\theta|<q(y)$.
Consequently, for almost every $(y,\theta)\in(0,1)\times[-\pi,\pi]$, we have
\[
\kappa_{\alpha,\beta,g}(y,\theta)=0
\iff
|\theta|\ge q(y).
\]
This proves~\eqref{eq:zero-set}.

It remains to determine the measure of the set~\eqref{eq:zero-set}.
For a fixed $y\in(0,1)$, the condition $|\theta|<q(y)$ defines an interval
of length
\[
2\min\{q(y),\pi\}
\]
inside $[-\pi,\pi]$. Hence,
\[
\left|
\left\{\theta\in[-\pi,\pi]\,:\,|\theta|\ge q(y)\right\}
\right|
=
2\pi-2\min\{q(y),\pi\}.
\]
After normalization by $2\pi$, we obtain
\[
1-\min\left\{1,\frac{q(y)}{\pi}\right\}
=
\left(1-\frac{q(y)}{\pi}\right)_+.
\]
Integrating with respect to $y\in(0,1)$ yields~\eqref{eq:zero-mu}.
\end{proof}

In the following, we set
\begin{equation}\label{mu0eq}
\mu_0
=
\int_0^1
\left(1-\frac{q(y)}{\pi}\right)_+
\dd y.
\end{equation}

\begin{remark}
We observe that $\mu_0$ is not necessarily positive. For instance, let
\begin{equation}\label{cosinfun}
    g(y)=-\cos(\pi y).
\end{equation}
Then
\[
q(y)
=
\frac{g'(y)}{\sqrt{1-(g(y))^2}}
=
\pi,
\quad y\in(0,1),
\]
and hence, by~\eqref{mu0eq},
\[
\mu_0=0.
\]
\end{remark}

The following proposition shows that the cosine mesh defined by~\eqref{cosinfun} is the only admissible mesh for which $\mu_0=0$.

\begin{proposition}\label{prop:cosine-mesh}
Let $g\in C^3([0,1])$ satisfy~\eqref{eq:g-ass}, and let $\mu_0$ be
defined by~\eqref{mu0eq}. Then $\mu_0=0$ if and only if
\[
g(y)=-\cos(\pi y),
\quad y\in[0,1].
\]
In particular, $\mu_0>0$ for any other admissible mesh.
\end{proposition}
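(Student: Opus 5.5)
The plan is to reduce the condition $\mu_0=0$ to a pointwise inequality on $q$, and then use the fact that $\int_0^1 q(y)\,\dd y$ is fixed by the boundary conditions on $g$.

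Since $q(y)=-\vartheta'(y)$ with $\vartheta(y)=\arccos(g(y))$, and $\vartheta$ is continuous on $[0,1]$ with $\vartheta(0)=\arccos(-1)=\pi$ and $\vartheta(1)=\arccos(1)=0$, we have
\[
\int_0^1 q(y)\,\dd y=\vartheta(0)-\vartheta(1)=\pi .
\]
This integral is improper at the endpoints, because $\sqrt{1-g^2}$ vanishes there. It still converges, since $q>0$ on $(0,1)$ and $\int_\varepsilon^{1-\varepsilon}q=\vartheta(\varepsilon)-\vartheta(1-\varepsilon)\to\pi$ by monotone convergence together with the continuity of $\vartheta$.

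For the direction ``if'': for $g(y)=-\cos(\pi y)$ the Remark already gives $q\equiv\pi$, hence $\mu_0=0$.

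For the direction ``only if'': the integrand $(1-q/\pi)_+$ in \eqref{mu0eq} is nonnegative and continuous on $(0,1)$, because $q$ is continuous there. So $\mu_0=0$ forces $(1-q(y)/\pi)_+=0$ for every $y\in(0,1)$, that is, $q(y)\ge\pi$ on $(0,1)$.

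Next consider $q-\pi\ge0$. It is continuous on $(0,1)$, and
\[
\int_0^1 (q-\pi)\,\dd y=\pi-\pi=0 .
\]
A nonnegative continuous function with zero integral vanishes identically, so $q\equiv\pi$ on $(0,1)$. Then $\vartheta'=-\pi$, and with $\vartheta(0)=\pi$ this gives $\vartheta(y)=\pi-\pi y=\pi(1-y)$. Therefore
\[
g(y)=\cos(\pi(1-y))=-\cos(\pi y)
\]
on $(0,1)$, and by continuity on $[0,1]$. The final claim, $\mu_0>0$ for any other admissible mesh, is simply the contrapositive, since $\mu_0\ge0$ always holds.

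The only delicate step is justifying $\int_0^1 q=\pi$ as an improper integral. Near the endpoints $g'$ may vanish, for instance when $g'(0)=0$, and $\sqrt{1-g^2}$ degenerates, so $q$ may be unbounded or may tend to $0$ there. The argument avoids any endpoint analysis by using only two facts: $\vartheta$ is continuous on the closed interval $[0,1]$, because $\arccos$ is continuous on $[-1,1]$, and $q\ge0$, so monotone convergence applies.
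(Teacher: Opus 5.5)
Your proof is correct and follows the same route as the paper. In both, $\mu_0=0$ forces $q\ge\pi$ on $(0,1)$; combined with $\int_0^1 q\,\dd y=\vartheta(0)-\vartheta(1)=\pi$ this gives $q\equiv\pi$, hence $\vartheta(y)=\pi(1-y)$ and $g(y)=-\cos(\pi y)$. The one addition is your justification of $\int_0^1 q\,\dd y=\pi$ as an improper integral, using $q>0$ and the continuity of $\vartheta$ on $[0,1]$, a step the paper does not spell out.
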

\begin{proof}
For the cosine mesh, the equality $\mu_0=0$ follows from the previous
remark. Conversely, assume that
\[
\mu_0
=
\int_0^1
\left(1-\frac{q(y)}{\pi}\right)_+
\dd y
=
0.
\]
Since the integrand is nonnegative and continuous on $(0,1)$, it must
vanish at every point of this interval. Therefore,
\begin{equation}\label{prop1}
    q(y)\ge\pi,
\quad y\in(0,1).
\end{equation}
On the other hand, since 
\[
q(y)=-\vartheta'(y), \quad y\in (0,1),
\]
and $\vartheta$ is
continuous on $[0,1]$, with
\[
\vartheta(0)=\pi,
\quad
\vartheta(1)=0,
\]
we obtain
\begin{equation}\label{prop2}
    \int_0^1 q(y)\dd y
=-\int_0^1 \vartheta'(y)\dd y
=
\pi.
\end{equation}
The two properties~\eqref{prop1} and~\eqref{prop2} imply that $q$ is identically equal to $\pi$ on
$(0,1)$. Indeed, if $q(y_0)>\pi$ at some point $y_0\in(0,1)$, continuity
would give $q>\pi$ on an interval containing $y_0$. Together with
$q\ge\pi$ on $(0,1)$, this would imply
\[
\int_0^1 q(y)\dd y>\pi,
\]
which is a contradiction. Hence,
\[
q(y)=\pi,
\quad y\in(0,1).
\]
Therefore, using that $\vartheta(0)=\pi$, we obtain
\[
\vartheta(y)-\pi=\int_0^y \vartheta'(s)\dd s=-\int_{0}^y q(s)\dd s=-\pi \int_{0}^y \dd s=-\pi y.
\]
Consequently
\[
\vartheta(y)= \pi (1-y)
\]
and then
\[
g(y)
=
\cos(\vartheta(y))
=
\cos(\pi(1-y))
=
-\cos(\pi y),
\quad y\in[0,1].
\]
This proves the equivalence. Since $\mu_0\ge0$, it also follows that
$\mu_0>0$ for any other admissible mesh.
\end{proof}

\section{Asymptotic representation}\label{sec:entries}

To prove Theorem~\ref{thm:main}, we need some preliminary notation and the classical asymptotic formula for Jacobi polynomials. For fixed $a,b>-1$ and $\theta\in(0,\pi)$, we have
\begin{equation}\label{eq:jacobi-asymptotic}
P_n^{(a,b)}(\cos\theta)
=
\frac{
\left(\sin\left(\frac{\theta}{2}\right)\right)^{-a-\frac12}
\left(\cos\left(\frac{\theta}{2}\right)\right)^{-b-\frac12}
}{\sqrt{\pi n}}
\cos\left(
\left(n+\frac{a+b+1}{2}\right)\theta
-\frac{2a+1}{4}\pi
\right)
+O\left(n^{-3/2}\right),
\end{equation}
uniformly for $\theta$ in compact subsets of $(0,\pi)$; see, e.g.,
\cite{Szego:1975:OP,habermann2021semicircle}. For $i,j=1,\ldots,N$, we set
\begin{equation*}
y_i=\frac{i-\frac12}{N},
\quad
\eta_j=\frac{j}{N},
\quad i,j=1,\ldots,N.
\end{equation*}
We define
\begin{equation}\label{eq:phase}
 \phi_{\alpha,\beta}(y)
 =
 \frac{\alpha+\beta+1}{2}\vartheta(y)
 -\frac{2\alpha+3}{4}\pi,
\end{equation}
and
\begin{equation}\label{eq:Aamp}
A_{\alpha,\beta,g}(y,\eta)
=
\frac{2^{\alpha+\beta}}{\sqrt{\pi}}
\eta^{-1/2}
\left(\sin\left(\frac{\vartheta(y)}{2}\right)\right)^{\alpha-\frac{3}{2}}
\left(\cos\left(\frac{\vartheta(y)}{2}\right)\right)^{\beta-\frac{3}{2}}
\chi\left(\frac{\eta q(y)}{2}\right).
\end{equation}
The previous notation allows us to derive the asymptotic representation of
the entries of $H_N$ that will be used in the sequel.

\begin{lemma}\label{lem:entry-asymptotic}
Let $0<\varepsilon<1/2$ and assume that
\begin{equation}\label{assuap1}
    \varepsilon\le y_i\le 1-\varepsilon,
\quad
\varepsilon\le \eta_j\le 1, \quad i,j=1,\ldots,N.
\end{equation}
Then
\begin{equation}
\left[H_N\right]_{i,j}
=
\frac1{\sqrt N}
A_{\alpha,\beta,g}\left(y_i,\eta_j\right)
\cos\left(
j\vartheta\left(y_i\right)+\phi_{\alpha,\beta}\left(y_i\right)
\right)
+O\left(N^{-3/2}\right),
\end{equation}
uniformly with respect to the indices $i,j$ satisfying~\eqref{assuap1}.
\end{lemma}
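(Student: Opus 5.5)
Fix $i,j$ with $\varepsilon\le y_i\le1-\varepsilon$ and $\varepsilon\le\eta_j\le 1$, and put $n=j-1$. The first step is the substitution $x=g(y)$ on the cell $s_i$, which corresponds to $y\in[(i-1)/N,i/N]$. In the angular variable $x=\cos\tau$ one has
\[
\omega_{\alpha,\beta}(\cos\tau)=2^{\alpha+\beta}\bigl(\sin(\tau/2)\bigr)^{2\alpha}\bigl(\cos(\tau/2)\bigr)^{2\beta}.
\]
Since $[(i-1)/N,i/N]\subset[\varepsilon/2,1-\varepsilon/2]$ for $N$ large, $\vartheta$ maps the cell into a fixed compact subset of $(0,\pi)$. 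So we may insert the asymptotic formula \eqref{eq:jacobi-asymptotic} with $a=\alpha+1$, $b=\beta+1$ inside the integral, and it holds uniformly there. The prefactor becomes $(\sin)^{-\alpha-3/2}(\cos)^{-\beta-3/2}/\sqrt{\pi n}$. Multiplied by the weight, the amplitude is $2^{\alpha+\beta}(\sin(\vartheta/2))^{\alpha-3/2}(\cos(\vartheta/2))^{\beta-3/2}/\sqrt{\pi n}$. The phase is $(n+\frac{\alpha+\beta+3}{2})\vartheta-\frac{2\alpha+3}{4}\pi$, which equals $j\vartheta+\phi_{\alpha,\beta}(\vartheta)$ by \eqref{eq:phase}. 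Since $n\ge\varepsilon N-1$, we have $1/\sqrt{\pi n}=\eta_j^{-1/2}/\sqrt{\pi N}+O(N^{-3/2})$. The $O(n^{-3/2})=O(N^{-3/2})$ remainder survives averaging over the cell, because $\omega$ is bounded there.

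This gives
\[
[H_N]_{i,j}=\frac{1}{h_i}\int_{x_{i-1}}^{x_i}\frac{B(x)}{\sqrt{N}}\cos\bigl(j\arccos x+\phi(x)\bigr)\,\dd x+O(N^{-3/2}),
\]
with $B$ smooth and $B(g(y_i))=\sqrt{\pi}\,A_{\alpha,\beta,g}(y_i,\eta_j)/\chi(\eta_jq(y_i)/2)$. The next step is to freeze the slowly varying parts at the midpoint. Writing $x=g(y)$ with $y=y_i+t/N$, $t\in[-1/2,1/2]$, one has $\dd x/h_i=g'(y)\,\dd t/(Ng'(y_i)+O(N^{-1}))=(1+O(N^{-1}))\,\dd t$, using $g\in C^3$ and Taylor. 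Freezing $B$ and $\phi$ at $y_i$ costs only $O(N^{-1})$ relative error, hence $O(N^{-3/2})$ absolutely. The amplitudes are bounded because we stay away from the endpoints, and this is where the restriction $y_i\in[\varepsilon,1-\varepsilon]$ is used.

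The main obstacle is the fast phase $j\vartheta(y)$, since $j\sim N$ and so it cannot simply be frozen. I would expand $\vartheta(y_i+t/N)=\vartheta(y_i)-q(y_i)t/N+O(N^{-2})$, which uses $\vartheta\in C^2$ on the compact set. Multiplying by $j\le N$ gives
\[
j\vartheta=j\vartheta(y_i)-\eta_jq(y_i)\,t+O(N^{-1}),
\]
uniformly in $t$. The cell average then reduces, up to $O(N^{-1})$ relative error, to
\[
\int_{-1/2}^{1/2}\cos\bigl(\Phi-\eta_jq(y_i)\,t\bigr)\,\dd t=\cos\Phi\int_{-1/2}^{1/2}\cos(\eta_jq(y_i)\,t)\,\dd t=\cos(\Phi)\,\chi\bigl(\eta_jq(y_i)/2\bigr),
\]
where $\Phi=j\vartheta(y_i)+\phi_{\alpha,\beta}(y_i)$. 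The odd (sine) part vanishes by symmetry, which is exactly why the midpoints $y_i$ are used. Collecting the factor $\sqrt{\pi}$ reproduces $A_{\alpha,\beta,g}(y_i,\eta_j)$ and gives the claim.

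Uniformity in $i,j$ follows because every $O(\cdot)$ constant depends only on $\varepsilon$, $g$ and $\alpha,\beta$ through bounds on compact subsets of $(0,\pi)$. One should double-check the power of $2$ in $\omega$ against \eqref{eq:Aamp}.
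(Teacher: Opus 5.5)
Your proposal is correct and follows essentially the same route as the paper: the substitution $x=g(y)$ on the cell centred at $y_i$, the Darboux asymptotic with $(a,b)=(\alpha+1,\beta+1)$ applied uniformly on a compact subset of $(0,\pi)$ (your amplitude $2^{\alpha+\beta}$ and phase $j\vartheta+\phi_{\alpha,\beta}$ are right), freezing the slow factors at $y_i$, linearizing the fast phase as $j\vartheta(y_i)-\eta_jq(y_i)t+O(N^{-1})$, and evaluating the cell integral as $\chi(\eta_jq(y_i)/2)\cos\Phi$ (you use parity, the paper an antiderivative plus a sum-to-product identity). Two bookkeeping slips need fixing: with your normalization $A_{\alpha,\beta,g}(y_i,\eta_j)=B(g(y_i))\,\chi(\eta_jq(y_i)/2)$ with no extra $\sqrt{\pi}$ (and better not to divide by $\chi$, which vanishes when $\eta_jq(y_i)/2$ is a nonzero multiple of $\pi$, possible once $q(y_i)\ge2\pi$), and the Jacobian ratio is $\dd x/h_i=g'(y)\,\dd t/(Nh_i)$ with $Nh_i=g'(y_i)+O(N^{-2})$.
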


\begin{proof}
Using the change of variable $x=g(y)$ in~\eqref{eq:HN}, we obtain
\begin{equation}\label{eq:HN-y}
\left[H_N\right]_{i,j}
=
\frac1{h_i}
\int_{y_i-\frac1{2N}}^{y_i+\frac1{2N}}
P_{j-1}^{(\alpha+1,\beta+1)}(g(y))
\omega_{\alpha,\beta}(g(y))g'(y)\dd y.
\end{equation}
The integration is performed over
\begin{equation}\label{inty}
    y\in\left[y_i-\frac1{2N},y_i+\frac1{2N}\right],
\end{equation}
and hence, by~\eqref{assuap1}, we have
\[
\varepsilon-\frac1{2N}
\le y\le
1-\varepsilon+\frac1{2N}.
\]
Therefore, for $N$ sufficiently large, the integration interval is contained in a compact subset of $(0,1)$. Since $g(y)\in(-1,1)$ for $y\in(0,1)$, it follows that
\[
\vartheta(y)=\arccos(g(y))
\]
ranges in a compact subset of $(0,\pi)$ throughout the integration interval. Moreover,
\[
\eta_j=\frac{j}{N}\ge\varepsilon
\]
implies
\begin{equation*}
    j\ge\varepsilon N.
\end{equation*}
Then, since the Jacobi asymptotic formula~\eqref{eq:jacobi-asymptotic} is uniform for $\vartheta$ in compact subsets of $(0,\pi)$, it holds uniformly with respect to $y$ throughout the integration interval whenever $j\ge\varepsilon N$. In particular, since
\[
1-g(y)
=
2\left(\sin\left(\frac{\vartheta(y)}2\right)\right)^2,
\quad
1+g(y)
=
2\left(\cos\left(\frac{\vartheta(y)}2\right)\right)^2,
\]
applying~\eqref{eq:jacobi-asymptotic}, for
\[
a=\alpha+1,
\quad
b=\beta+1,
\]
we have
\begin{eqnarray}
&& \notag P_{j-1}^{(\alpha+1,\beta+1)}(g(y))
\omega_{\alpha,\beta}(g(y))=\\&&
\frac{2^{\alpha+\beta}}{\sqrt{\pi j}}
\left(\sin\left(\frac{\vartheta(y)}2\right)\right)^{\alpha-\frac32}
\left(\cos\left(\frac{\vartheta(y)}2\right)\right)^{\beta-\frac32}
\cos\left(
j\vartheta(y)+\phi_{\alpha,\beta}(y)
\right)
+O\left(j^{-3/2}\right).
\label{eq:weighted-asymptotic}
\end{eqnarray}
Using~\eqref{inty}, we can write
\[
y=y_i+\frac{u}{N},
\quad
-\frac{1}{2}\le u\le\frac{1}{2}.
\]
Since $\vartheta'(y)=-q(y)$ by~\eqref{eq:vartheta-q}, the Taylor
expansion gives
\begin{equation}\label{varopas}
    \vartheta\left(y_i+\frac{u}{N}\right)
=
\vartheta\left(y_i\right)
-\frac{q\left(y_i\right)}{N}u
+O\left(N^{-2}\right),
\end{equation}
uniformly for $|u|\le1/2$. Consequently, by~\eqref{eq:phase}, we have
\begin{equation}\label{varopas1}
\phi_{\alpha,\beta}\left(y_i+\frac{u}{N}\right)-
\phi_{\alpha,\beta}\left(y_i\right)=\frac{\alpha+\beta+1}{2}\left[\vartheta\left(y_i+\frac{u}{N}\right)-\vartheta\left(y_i\right)\right]=O\left(N^{-1}\right).
\end{equation}
Then, multiplying~\eqref{varopas} by $j$ and combining it with~\eqref{varopas1}, using $j/N=\eta_j$ and $j\le N$, we obtain
\begin{equation}\label{sammmasasa}
    j\vartheta\left(y_i+\frac{u}{N}\right)
+
\phi_{\alpha,\beta}\left(y_i+\frac{u}{N}\right)
=
j\vartheta\left(y_i\right)
+
\phi_{\alpha,\beta}\left(y_i\right)
-
\eta_jq\left(y_i\right)u
+
O\left(N^{-1}\right).
\end{equation}
A Taylor expansion around $y_i$ also gives
\begin{equation}\label{eq:hi-mid}
h_i=x_i-x_{i-1}
=
g\left(y_i+\frac1{2N}\right)
-
g\left(y_i-\frac1{2N}\right)
=
\frac{g'\left(y_i\right)}{N}+O\left(N^{-3}\right).
\end{equation}
Substituting~\eqref{eq:weighted-asymptotic} into~\eqref{eq:HN-y} and using the change of variable
\[
u\in[-1/2,1/2]\to y=y_i+\frac{u}{N},
\quad
\dd y=\frac{\dd u}{N},
\]
and recalling that
\[
\varepsilon N\le j\le N,
\]
we obtain
\begin{eqnarray*}
\left[H_N\right]_{i,j}
&=&
\frac{2^{\alpha+\beta}}{\sqrt{\pi j}}
\frac{1}{Nh_i}
\int_{-1/2}^{1/2}
\left(\sin\left(\frac{\vartheta\left(y_i+\frac{u}{N}\right)}{2}\right)\right)^{\alpha-\frac32}
\left(\cos\left(\frac{\vartheta\left(y_i+\frac{u}{N}\right)}{2}\right)\right)^{\beta-\frac32}
\\
&&\times
g'\left(y_i+\frac{u}{N}\right)
\cos\left(
j\vartheta\left(y_i+\frac{u}{N}\right)
+\phi_{\alpha,\beta}\left(y_i+\frac{u}{N}\right)
\right)\dd u
+O\left(N^{-3/2}\right).
\end{eqnarray*}
We notice that, by~\eqref{eq:hi-mid}, we get
\begin{equation}\label{sacca}
    Nh_i
=
g'\left(y_i\right)+O\left(N^{-2}\right).
\end{equation}
Using the mean value theorem, uniformly for $|u|\le1/2$, we also have
\begin{equation}\label{sacca1}
g'\left(y_i+\frac{u}{N}\right)
=
g'\left(y_i\right)+O\left(N^{-1}\right).
\end{equation}
Since $g'\left(y_i\right)$ is bounded away from zero for
$\varepsilon\le y_i\le1-\varepsilon$, combining~\eqref{sacca} and~\eqref{sacca1}, it follows that
\begin{equation}\label{pass1}
    \frac{g'\left(y_i+\frac{u}{N}\right)}{Nh_i}=\frac{g'\left(y_i\right)+O\left(N^{-1}\right)}{g'\left(y_i\right)+O\left(N^{-2}\right)}
=
1+O\left(N^{-1}\right).
\end{equation}
Applying again the mean value theorem, uniformly for $|u|\le1/2$, we obtain
\begin{eqnarray}
&&\notag
\left(\sin\left(\frac{\vartheta\left(y_i+\frac{u}{N}\right)}{2}\right)\right)^{\alpha-\frac32}
\left(\cos\left(\frac{\vartheta\left(y_i+\frac{u}{N}\right)}{2}\right)\right)^{\beta-\frac32}
\\
&=&
\left(\sin\left(\frac{\vartheta\left(y_i\right)}{2}\right)\right)^{\alpha-\frac32}
\left(\cos\left(\frac{\vartheta\left(y_i\right)}{2}\right)\right)^{\beta-\frac32}
+O\left(N^{-1}\right).\label{pass2}
\end{eqnarray}
Using~\eqref{sammmasasa} and the mean value theorem, we also have
\begin{equation}\label{pass3}
    \cos\left(
j\vartheta\left(y_i+\frac{u}{N}\right)
+\phi_{\alpha,\beta}\left(y_i+\frac{u}{N}\right)
\right)
=
\cos\left(
j\vartheta\left(y_i\right)
+\phi_{\alpha,\beta}\left(y_i\right)
-\eta_jq\left(y_i\right)u
\right)
+O\left(N^{-1}\right).
\end{equation}
Therefore, combining~\eqref{pass1}, \eqref{pass2} and~\eqref{pass3}, we get
\begin{eqnarray*}
\left[H_N\right]_{i,j}
&=&
\frac{2^{\alpha+\beta}}{\sqrt{\pi j}}
\left(\sin\left(\frac{\vartheta\left(y_i\right)}{2}\right)\right)^{\alpha-\frac32}
\left(\cos\left(\frac{\vartheta\left(y_i\right)}{2}\right)\right)^{\beta-\frac32}
\\
&&\times
\int_{-1/2}^{1/2}
\cos\left(
j\vartheta\left(y_i\right)
+\phi_{\alpha,\beta}\left(y_i\right)
-\eta_jq\left(y_i\right)u
\right)\dd u
+O\left(N^{-3/2}\right).
\end{eqnarray*}
Finally, we have
\begin{eqnarray*}
&&
\int_{-1/2}^{1/2}
\cos\left(
j\vartheta\left(y_i\right)
+\phi_{\alpha,\beta}\left(y_i\right)
-\eta_jq\left(y_i\right)u
\right)\dd u
\\
&=&
\frac{
\sin\left(
j\vartheta\left(y_i\right)
+\phi_{\alpha,\beta}\left(y_i\right)
+\dfrac{\eta_jq\left(y_i\right)}{2}
\right)
-
\sin\left(
j\vartheta\left(y_i\right)
+\phi_{\alpha,\beta}\left(y_i\right)
-\dfrac{\eta_jq\left(y_i\right)}{2}
\right)
}{
\eta_jq\left(y_i\right)
}
\\
&=&
\frac{
2\sin\left(\dfrac{\eta_jq\left(y_i\right)}{2}\right)
}{
\eta_jq\left(y_i\right)
}
\cos\left(
j\vartheta\left(y_i\right)
+\phi_{\alpha,\beta}\left(y_i\right)
\right)
\\
&=&
\chi\left(\frac{\eta_jq\left(y_i\right)}{2}\right)
\cos\left(
j\vartheta\left(y_i\right)
+\phi_{\alpha,\beta}\left(y_i\right)
\right),
\end{eqnarray*}
where in the last equality we have used~\eqref{chi:eq}. Hence, since
\[
j^{-1/2}=N^{-1/2}\eta_j^{-1/2},
\]
the result follows from~\eqref{eq:Aamp}.
\end{proof}

\begin{remark}\label{rem:index-count}
Let $\varepsilon\in(0,1/2)$. We  observe that the number of points $y_i$ contained in the interval $[\varepsilon,1-\varepsilon]$ can be determined explicitly. Indeed, since
\[
y_i=\frac{i-\frac12}{N},
\]
we have
\[
y_i<\varepsilon
\iff
i<\varepsilon N+\frac12.
\]
Hence, there are
\[
\left\lceil\varepsilon N+\frac12\right\rceil-1
\]
points $y_i$ to the left of $\varepsilon$. Moreover, since
\[
y_{N+1-i}=1-y_i,
\]
the same number of points lies to the right of $1-\varepsilon$. Therefore, the number of points $y_i$ contained in $[\varepsilon,1-\varepsilon]$ is
\[
N-2\left(
\left\lceil\varepsilon N+\frac12\right\rceil-1
\right).
\]
Similarly, since
\[
\eta_j=\frac{j}{N},
\]
we have
\[
\eta_j<\varepsilon
\iff
j<\varepsilon N.
\]
Thus, there are
\[
\left\lceil\varepsilon N\right\rceil-1
\]
points $\eta_j$ to the left of $\varepsilon$, and hence the number of points $\eta_j$ contained in $[\varepsilon,1]$ is
\[
N-\left\lceil\varepsilon N\right\rceil+1.
\]
\end{remark}

\section{The Gram sequence}\label{sec:gram}

We first introduce a matrix sequence that will be used to study the asymptotic behavior of the Gram sequence
\[
\left\{H_NH_N^{\top}\right\}_N.
\]
To this aim, let
\[
I=[a,b]\subset(0,1),
\quad
\eta_0\in(0,1).
\]
We consider
\begin{equation}\label{hyp4}
    \vartheta\in C^2(I),
\quad
0<\vartheta(y)<\pi,
\quad
q(y)=-\vartheta'(y)>0,
\quad y\in I,
\end{equation}
and
\[
A\in C^1\left(I\times\left[\eta_0,1\right]\right),
\quad
\phi\in C^1(I).
\]

\begin{remark}
The functions introduced in Section~\ref{sec:entries} satisfy the above assumptions on any compact interval $I\subset(0,1)$. Indeed, since $g\in C^3([0,1])$ and $g(y)\in(-1,1)$ for $y\in I$, we have
\[
\vartheta=\arccos(g(y))\in C^2(I),
\quad
0<\vartheta(y)<\pi,
\quad
q(y)=-\vartheta'(y)>0.
\]
Moreover, since $I\subset(0,1)$, we have
\[
g(y)\in(-1,1),
\]
and hence
\[
\frac{\vartheta(y)}{2}
=
\frac{\arccos(g(y))}{2}
\in\left(0,\frac{\pi}{2}\right).
\]
Therefore,
\[
\sin\left(\frac{\vartheta(y)}{2}\right)>0,
\quad
\cos\left(\frac{\vartheta(y)}{2}\right)>0,
\quad y\in I.
\]
Since $I$ is compact, both functions are bounded away from zero on $I$. Hence, for any $\eta_0\in(0,1)$, all the factors appearing in~\eqref{eq:Aamp} are continuously differentiable on
$I\times\left[\eta_0,1\right]$, and therefore
\[
A_{\alpha,\beta,g}\in
C^1\left(I\times\left[\eta_0,1\right]\right).
\]
\end{remark}

 Let $Q_N\in\RR^{N\times N}$ be defined by
\begin{equation}\label{eq:QN-general}
\left[Q_N\right]_{i,j}
=
\begin{cases}
\dfrac{1}{\sqrt{N}}
A\left(y_i,\eta_j\right)
\cos\left(
j\vartheta\left(y_i\right)+\phi\left(y_i\right)
\right),
&
\left(y_i,\eta_j\right)\in I\times\left[\eta_0,1\right],
\\[2ex]
0,
&
\text{otherwise}.
\end{cases}
\end{equation}
For $y\in I$ and $\theta\in[-\pi,\pi]$, we define
\begin{equation}\label{eq:kappa-general}
\kappa_I(y,\theta)
=
\frac{\pi}{2q(y)}
\sum_{\substack{\ell\in\mathbb Z\\
\eta_0q(y)\le \left|\theta+2\ell\pi\right|\le q(y)}}
\left[A\left(
y,
\frac{\left|\theta+2\ell\pi\right|}{q(y)}
\right)\right]^2,
\end{equation}
and set
\[
\kappa_I(y,\theta)=0,
\quad y\notin I.
\]

\begin{proposition}\label{prop:gram-general}
The matrix sequence $\left\{Q_NQ_N^\top\right\}_N$ satisfies
\begin{equation}\label{eq:gram-general-glt}
\left\{Q_NQ_N^\top\right\}_N
\sim_{\mathrm{GLT}}
\kappa_I.
\end{equation}
\end{proposition}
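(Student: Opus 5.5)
The plan is to compute the entries of the Gram matrix explicitly, show that near the diagonal they behave like Fourier coefficients of a symbol depending on $y$, and then assemble the GLT structure by a banded approximation plus an approximating class of sequences (a.c.s.) argument.

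\textbf{Entry computation.} For $i,i'$ with $y_i,y_{i'}\in I$, write $\vartheta_i=\vartheta(y_i)$ and $\phi_i=\phi(y_i)$. By~\eqref{eq:QN-general},
\[
[Q_NQ_N^\top]_{i,i'}=\frac1N\sum_{j:\,\eta_j\ge\eta_0}A(y_i,\eta_j)A(y_{i'},\eta_j)\cos(j\vartheta_i+\phi_i)\cos(j\vartheta_{i'}+\phi_{i'}).
\]
The product-to-sum formula splits this into a ``difference'' part, with phase $j(\vartheta_i-\vartheta_{i'})+\phi_i-\phi_{i'}$, and a ``sum'' part, with phase $j(\vartheta_i+\vartheta_{i'})+\phi_i+\phi_{i'}$.

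\textbf{The sum part is negligible.} Since $\vartheta(I)$ is a compact subset of $(0,\pi)$, the frequency $\vartheta_i+\vartheta_{i'}$ lies in a compact subset of $(0,2\pi)$. Abel summation in $j$ then gives $O(1/N)$ uniformly. This uses that $A$ is $C^1$, so the variation in $j$ is bounded; the cutoff at $\eta_0$ only adds a boundary term. The resulting matrix has squared Frobenius norm $O(1)=o(N)$, so it is zero-distributed and can be dropped.

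\textbf{The difference part near the diagonal.} Set $k=i-i'$ with $|k|\le m$ fixed. By~\eqref{hyp4}, $\vartheta_i-\vartheta_{i'}=q(y_i)k/N+O(k^2/N^2)$, and $\phi_i-\phi_{i'}=O(k/N)$. A Riemann sum argument then gives, uniformly in $i$,
\[
[Q_NQ_N^\top]_{i,i-k}=c_k(y_i)+o(1),\qquad
c_k(y)=\frac12\int_{\eta_0}^1A(y,\eta)^2\cos(\eta q(y)k)\,\dd\eta=\frac{1}{2q(y)}\int_{\eta_0q(y)}^{q(y)}A\!\left(y,\tfrac tq\right)^2\cos(tk)\,\dd t .
\]
Write $c_k(y)=\frac{1}{4q}\int_{\RR}F_y(t)e^{\mathrm{i}kt}\,\dd t$ with $F_y(t)=A(y,|t|/q)^2\mathbf 1_{\eta_0q\le|t|\le q}$. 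Periodizing, $c_k(y)$ is the $k$-th Fourier coefficient of $\frac{2\pi}{4q}\sum_\ell F_y(\theta+2\ell\pi)$, which is exactly $\kappa_I(y,\cdot)$ in~\eqref{eq:kappa-general}. Hence the $m$-banded part equals
\[
\sum_{|k|\le m}D_N(c_k)\,T_N(e^{\mathrm{i}k\theta})+E_{N,m},\qquad \|E_{N,m}\|\to0 .
\]
For the error, a fixed number of diagonals, each with uniformly $o(1)$ entries, has $o(1)$ spectral norm. The diagonal sampling functions $c_k$ are extended by zero outside $I$; they are Riemann integrable, with possible jumps at $a,b$ only. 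Consequently the banded part is GLT with symbol $\kappa_I^{(m)}(y,\theta)=\sum_{|k|\le m}c_k(y)e^{\mathrm{i}k\theta}$.

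\textbf{Off-band tail (main obstacle).} Two facts close the argument.
\begin{itemize}
\item[(i)] $\kappa_I^{(m)}\to\kappa_I$ in measure. This holds because $\kappa_I$ is bounded, so its Fourier partial sums converge in $L^2$ for each $y$, and dominated convergence in $y$ follows from the bound $\sum_k|c_k(y)|^2\le C$.
\item[(ii)] The off-band remainder $R_{N,m}$ (entries with $|i-i'|>m$) satisfies $\limsup_N\|R_{N,m}\|_F^2/N\to0$ as $m\to\infty$, which makes $\{\text{band}_m\}$ an a.c.s.\ for $\{Q_NQ_N^\top\}$.
\end{itemize}
For (ii) I would use summation by parts in $j$ on the difference part to get
\[
|[Q_NQ_N^\top]_{i,i'}|\lesssim \frac{1}{N|\vartheta_i-\vartheta_{i'}|}+\frac1N\lesssim\frac1{|k|}+\frac1N,
\]
using that $q$ is bounded below on $I$, so $|\vartheta_i-\vartheta_{i'}|\gtrsim|k|/N$. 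This only gives $\sum_{|k|>m}k^{-2}$ per row, which does tend to $0$ and is what is needed; the delicate point is making the bound uniform when $k/N$ is not small, where the Taylor linearization fails. Monotonicity of $\vartheta$ still gives the lower bound on $|\vartheta_i-\vartheta_{i'}|$ there, and the upper limit $\le 2\pi-\delta$ keeps that frequency away from multiples of $2\pi$. With (i), (ii) and the GLT closure under a.c.s.\ limits, we conclude~\eqref{eq:gram-general-glt}.
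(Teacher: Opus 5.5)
Your proposal is correct and follows essentially the same route as the paper. Both arguments use the product-to-sum splitting. Abel summation makes the sum-frequency part negligible and gives the off-band bound $C(1/|k|+1/N)$. Taylor expansion and Riemann sums handle the near-diagonal entries, $c_k(y)$ is identified as the Fourier coefficients of $\kappa_I(y,\cdot)$, and a banded a.c.s. with symbols $\kappa_I^{(m)}\to\kappa_I$ in $L^2$ closes the proof.

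One small slip: the claim $\|E_{N,m}\|\to0$ in spectral norm is false. When $y_i\in I$ but $y_{i-k}\notin I$, the Gram entry is $0$ while your banded model gives $c_k(y_i)\neq0$. There are only $O(m)$ such bounded entries, so $E_{N,m}$ is a small-norm matrix plus a bounded-rank one; the paper handles exactly this by counting them in Frobenius norm. It is still zero-distributed, so the conclusion is unaffected.
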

\begin{proof}
We set $G_N=Q_NQ_N^\top$ and
\[
m_N=\left\lceil N\eta_0\right\rceil.
\]
For $y_i,y_k\in I$, using the trigonometric identity
\[
\cos(a)\cos(b)
=
\frac{1}{2}\cos\left(a-b\right)
+
\frac{1}{2}\cos\left(a+b\right),
\]
and recalling that
\[
\eta_j=\frac{j}{N},
\quad
\eta_j\in\left[\eta_0,1\right]
\iff
j=m_N,\ldots,N,
\]
we obtain
\begin{eqnarray}
\left[G_N\right]_{i,k}
&=&
\frac{1}{2N}
\sum_{j=m_N}^{N}
A\left(y_i,\eta_j\right)
A\left(y_k,\eta_j\right)
\cos\left(
j\left(\vartheta\left(y_i\right)-\vartheta\left(y_k\right)\right)
+
\phi\left(y_i\right)-\phi\left(y_k\right)
\right)
\notag
\\
&&+
\frac{1}{2N}
\sum_{j=m_N}^{N}
A\left(y_i,\eta_j\right)
A\left(y_k,\eta_j\right)
\cos\left(
j\left(\vartheta\left(y_i\right)+\vartheta\left(y_k\right)\right)
+
\phi\left(y_i\right)+\phi\left(y_k\right)
\right).
\label{eq:gram-products}
\end{eqnarray}
We first analyze the entries along a fixed diagonal. Let $r\in\mathbb Z$ be fixed and set
\[
k=i-r.
\]
Since
\begin{equation}\label{yi}
    y_i-y_{i-r}=\frac{r}{N},
\end{equation}
we can write
\[
\vartheta\left(y_{i-r}\right)=\vartheta\left(y_{i}-\frac{r}{N}\right).
\]
Then, a Taylor expansion gives
\begin{equation}\label{multsp}
    N\left(
\vartheta\left(y_i\right)
-
\vartheta\left(y_{i-r}\right)
\right)
=
-rq\left(y_i\right)
+
O\left(N^{-1}\right),
\end{equation}
uniformly for $y_i,y_{i-r}\in I$. Moreover, using~\eqref{yi} and the mean value theorem, we have
\begin{equation}
\label{multsp1}
\phi\left(y_i\right)-\phi\left(y_{i-r}\right)
=
O\left(N^{-1}\right),
\end{equation}
and
\begin{equation}\label{Aimr}
    A\left(y_{i-r},\eta_j\right)
=
A\left(y_i,\eta_j\right)
+
O\left(N^{-1}\right),
\end{equation}
uniformly with respect to $j$. Then, multiplying both sides of~\eqref{multsp} by
\[
\eta_j=\frac{j}{N},
\]
and combining the resulting relation with~\eqref{multsp1}, we obtain
\[
j\left(
\vartheta\left(y_i\right)
-
\vartheta\left(y_{i-r}\right)
\right)
+
\phi\left(y_i\right)
-
\phi\left(y_{i-r}\right)
=
-rq\left(y_i\right)\eta_j
+
O\left(N^{-1}\right).
\]
Therefore, for $k=i-r$, by the last relation and~\eqref{Aimr}, the first sum in~\eqref{eq:gram-products} is equal to
\[
\frac{1}{2N}
\sum_{j=m_N}^{N}
\left[A\left(y_i,\eta_j\right)\right]^2
\cos\left(
rq\left(y_i\right)\eta_j
\right)
+
O\left(N^{-1}\right).
\]
For a fixed $r\in\mathbb Z$, the function
\[
(y,\eta)
\longmapsto
\left[A\left(y,\eta\right)\right]^2
\cos\left(rq(y)\eta\right)
\]
is continuously differentiable with respect to $\eta$ on
$I\times\left[\eta_0,1\right]$. Hence, denoting by
\begin{equation*}
c_r(y)
=
\frac{1}{2}
\int_{\eta_0}^{1}
\left[A\left(y,\eta\right)\right]^2
\cos\left(rq(y)\eta\right)
\dd\eta,
\quad y\in I,
\end{equation*}
the corresponding Riemann sums satisfy
\begin{equation}\label{amas}
    \frac{1}{2N}
\sum_{j=m_N}^{N}
\left[A\left(y_i,\eta_j\right)\right]^2
\cos\left(rq\left(y_i\right)\eta_j\right)
=
c_r\left(y_i\right)
+
O\left(N^{-1}\right),
\end{equation}
uniformly for $y_i\in I$. We set
\[
c_r(y)=0,
\quad y\notin I.
\]

We now consider the second sum in~\eqref{eq:gram-products}. Since $I$ is compact and
\begin{equation}\label{thera}
    0<\vartheta(y)<\pi,
\quad y\in I,
\end{equation}
there exists a constant $c>0$ such that
\[
c\le \vartheta(y)\le \pi-c,
\quad y\in I.
\]
Hence,
\[
2c
\le
\vartheta\left(y_i\right)+\vartheta\left(y_k\right)
\le
2\pi-2c.
\]
Therefore, we obtain
\begin{equation}\label{sinbound}
\left|
1-
e^{\mathrm{i}\left(\vartheta\left(y_i\right)+\vartheta\left(y_k\right)\right)}
\right|
=
2\left|
\sin\left(
\frac{\vartheta\left(y_i\right)+\vartheta\left(y_k\right)}{2}
\right)
\right|
\ge
2\sin\left(c\right)>0.
\end{equation}
Since $A\in C^1\left(I\times\left[\eta_0,1\right]\right)$, there exists a constant $C>0$, independent of $N,i,k$, such that
\begin{equation}\label{A1}
    \max_{m_N\le j\le N}
\left|
A\left(y_i,\eta_j\right)
A\left(y_k,\eta_j\right)
\right|
\le C
\end{equation}
and
\begin{equation}\label{A2}
\sum_{j=m_N}^{N-1}
\left|
A\left(y_i,\eta_{j+1}\right)
A\left(y_k,\eta_{j+1}\right)
-
A\left(y_i,\eta_j\right)
A\left(y_k,\eta_j\right)
\right|
\le C.
\end{equation}
Moreover, using the geometric sum, we have
\begin{eqnarray*}
\sum_{j=m_N}^{\ell}
e^{\mathrm{i}j\left(\vartheta\left(y_i\right)+\vartheta\left(y_k\right)\right)}
&=&
e^{\mathrm{i}m_N\left(\vartheta\left(y_i\right)+\vartheta\left(y_k\right)\right)}
\sum_{j=0}^{\ell-m_N}
e^{\mathrm{i}j\left(\vartheta\left(y_i\right)+\vartheta\left(y_k\right)\right)}
\\
&=&
e^{\mathrm{i}m_N\left(\vartheta\left(y_i\right)+\vartheta\left(y_k\right)\right)}
\frac{
1-
e^{\mathrm{i}\left(\ell-m_N+1\right)
\left(\vartheta\left(y_i\right)+\vartheta\left(y_k\right)\right)}
}{
1-
e^{\mathrm{i}\left(\vartheta\left(y_i\right)+\vartheta\left(y_k\right)\right)}
}.
\end{eqnarray*}
Hence,  by~\eqref{sinbound}, we obtain
\begin{equation}\label{49new}
    \left|
\sum_{j=m_N}^{\ell}
e^{\mathrm{i}j\left(\vartheta\left(y_i\right)+\vartheta\left(y_k\right)\right)}
\right|
\le
\frac{2}{
\left|
1-
e^{\mathrm{i}\left(\vartheta\left(y_i\right)+\vartheta\left(y_k\right)\right)}
\right|
}
\le
\frac{1}{\sin\left(c\right)},
\quad
m_N\le\ell\le N.
\end{equation}
We now bound the second sum in~\eqref{eq:gram-products}. Using
\[
\cos(t)=\operatorname{Re}\left(e^{\mathrm{i}t}\right),
\]
we have
\begin{eqnarray}
\notag &&
\frac{1}{2N}
\sum_{j=m_N}^{N}
A\left(y_i,\eta_j\right)
A\left(y_k,\eta_j\right)
\cos\left(
j\left(\vartheta\left(y_i\right)+\vartheta\left(y_k\right)\right)
+
\phi\left(y_i\right)+\phi\left(y_k\right)
\right)
\\
&=&
\frac{1}{2N}
\operatorname{Re}\left[
e^{\mathrm{i}\left(\phi\left(y_i\right)+\phi\left(y_k\right)\right)}
\sum_{j=m_N}^{N}
A\left(y_i,\eta_j\right)
A\left(y_k,\eta_j\right)
e^{\mathrm{i}j\left(\vartheta\left(y_i\right)+\vartheta\left(y_k\right)\right)}
\right].\label{realpart}
\end{eqnarray}
Moreover, for any $j=m_N,\ldots,N$, we can write
\begin{equation}\label{telescops}
e^{\mathrm{i}j\left(\vartheta\left(y_i\right)+\vartheta\left(y_k\right)\right)}
=
\sum_{\ell=m_N}^{j}
e^{\mathrm{i}\ell\left(\vartheta\left(y_i\right)+\vartheta\left(y_k\right)\right)}
-
\sum_{\ell=m_N}^{j-1}
e^{\mathrm{i}\ell\left(\vartheta\left(y_i\right)+\vartheta\left(y_k\right)\right)}.
\end{equation}
Then, we obtain
\begin{eqnarray*}
&&
\sum_{j=m_N}^{N}
A\left(y_i,\eta_j\right)
A\left(y_k,\eta_j\right)
e^{\mathrm{i}j\left(\vartheta\left(y_i\right)+\vartheta\left(y_k\right)\right)}
\\
&=&
\sum_{j=m_N}^{N}
A\left(y_i,\eta_j\right)
A\left(y_k,\eta_j\right)
\sum_{\ell=m_N}^{j}
e^{\mathrm{i}\ell\left(\vartheta\left(y_i\right)+\vartheta\left(y_k\right)\right)}
-
\sum_{j=m_N+1}^{N}
A\left(y_i,\eta_j\right)
A\left(y_k,\eta_j\right)
\sum_{\ell=m_N}^{j-1}
e^{\mathrm{i}\ell\left(\vartheta\left(y_i\right)+\vartheta\left(y_k\right)\right)}\\
&=&
\sum_{j=m_N}^{N}
A\left(y_i,\eta_j\right)
A\left(y_k,\eta_j\right)
\sum_{\ell=m_N}^{j}
e^{\mathrm{i}\ell\left(\vartheta\left(y_i\right)+\vartheta\left(y_k\right)\right)}
-
\sum_{j=m_N}^{N-1}
A\left(y_i,\eta_{j+1}\right)
A\left(y_k,\eta_{j+1}\right)
\sum_{\ell=m_N}^{j}
e^{\mathrm{i}\ell\left(\vartheta\left(y_i\right)+\vartheta\left(y_k\right)\right)}.
\end{eqnarray*}
Here, in the second sum of the first equality, the term corresponding to $j=m_N$ vanishes since the inner sum is empty. Therefore, by~\eqref{A1}, \eqref{A2}, \eqref{49new}, and the triangle inequality, we have
\begin{eqnarray}
\notag &&
\left|
\sum_{j=m_N}^{N}
A\left(y_i,\eta_j\right)
A\left(y_k,\eta_j\right)
e^{\mathrm{i}j\left(\vartheta\left(y_i\right)+\vartheta\left(y_k\right)\right)}
\right|
\\\notag
&\le&
\left|
A\left(y_i,\eta_N\right)
A\left(y_k,\eta_N\right)
\right|
\left|
\sum_{\ell=m_N}^{N}
e^{\mathrm{i}\ell\left(\vartheta\left(y_i\right)+\vartheta\left(y_k\right)\right)}
\right|
\\
&&+
\sum_{j=m_N}^{N-1}
\left|
A\left(y_i,\eta_j\right)
A\left(y_k,\eta_j\right)
-
A\left(y_i,\eta_{j+1}\right)
A\left(y_k,\eta_{j+1}\right)
\right|
\left|
\sum_{\ell=m_N}^{j}
e^{\mathrm{i}\ell\left(\vartheta\left(y_i\right)+\vartheta\left(y_k\right)\right)}
\right|
\le
C,\label{amslappppa}
\end{eqnarray}
uniformly with respect to $N,i,k$. Then, combining the estimates for the two sums in~\eqref{eq:gram-products} given by~\eqref{amas}, \eqref{realpart}, and~\eqref{amslappppa}, we obtain that, for any fixed $r\in\mathbb Z$ and $k=i-r$,
\[
\left[G_N\right]_{i,i-r}
=
c_r\left(y_i\right)+O\left(N^{-1}\right),
\]
uniformly for $y_i,y_{i-r}\in I$.

We next derive a uniform estimate for the entries of $G_N$. Since, by assumption, $q(y)>0$ on the compact interval $I$, there exists a constant $c>0$ such that
\[
q(y)\ge c,
\quad y\in I.
\]
Then, by~\eqref{hyp4} and the mean value theorem, we have
\[
\left|
\vartheta\left(y_i\right)
-
\vartheta\left(y_k\right)
\right|
\ge
c\frac{\left|i-k\right|}{N},
\quad i\neq k.
\]
Moreover, by~\eqref{thera}, we have
\[
-\pi
<
\vartheta\left(y_i\right)
-
\vartheta\left(y_k\right)
<
\pi.
\]
Hence, using
\[
\sin(x)\ge\frac{2}{\pi}x,
\quad \text{for any}\quad
 x\in\left[0,\frac{\pi}{2}\right],
\]
we obtain
\begin{equation}\label{smaaac}
\left|
1-
e^{\mathrm{i}\left(\vartheta\left(y_i\right)-\vartheta\left(y_k\right)\right)}
\right|
=
2\left|
\sin\left(
\frac{\vartheta\left(y_i\right)-\vartheta\left(y_k\right)}{2}
\right)
\right|
\ge
\frac{2}{\pi}
\left|
\vartheta\left(y_i\right)-\vartheta\left(y_k\right)
\right|
\ge
c\frac{\left|i-k\right|}{N}.
\end{equation}
For $i\ne k$, using~\eqref{smaaac} and proceeding as in~\eqref{49new}, we have
\begin{equation}\label{smaaacsa}
    \left|
\sum_{\ell=m_N}^{j}
e^{\mathrm{i}\ell\left(\vartheta\left(y_i\right)-\vartheta\left(y_k\right)\right)}
\right|
\le
\frac{2}{
\left|
1-
e^{\mathrm{i}\left(\vartheta\left(y_i\right)-\vartheta\left(y_k\right)\right)}
\right|
}
\le
C\frac{N}{|i-k|}, \quad m_N\le j\le N.
\end{equation}
Proceeding as in the previous case and using the same identity as in~\eqref{telescops}, we obtain
\begin{eqnarray*}
&&
\sum_{j=m_N}^{N}
A\left(y_i,\eta_j\right)
A\left(y_k,\eta_j\right)
e^{\mathrm{i}j\left(\vartheta\left(y_i\right)-\vartheta\left(y_k\right)\right)}
\\
&=&
A\left(y_i,\eta_N\right)
A\left(y_k,\eta_N\right)
\sum_{\ell=m_N}^{N}
e^{\mathrm{i}\ell\left(\vartheta\left(y_i\right)-\vartheta\left(y_k\right)\right)}
\\
&&+
\sum_{j=m_N}^{N-1}
\left[
A\left(y_i,\eta_j\right)
A\left(y_k,\eta_j\right)
-
A\left(y_i,\eta_{j+1}\right)
A\left(y_k,\eta_{j+1}\right)
\right]
\sum_{\ell=m_N}^{j}
e^{\mathrm{i}\ell\left(\vartheta\left(y_i\right)-\vartheta\left(y_k\right)\right)}.
\end{eqnarray*}
Therefore, by~\eqref{A1}, \eqref{A2} and~\eqref{smaaacsa}, we obtain
\begin{eqnarray*}
&&
\left|
\sum_{j=m_N}^{N}
A\left(y_i,\eta_j\right)
A\left(y_k,\eta_j\right)
e^{\mathrm{i}j\left(\vartheta\left(y_i\right)-\vartheta\left(y_k\right)\right)}
\right|
\\
&\le&
C\frac{N}{|i-k|}
+
C\frac{N}{|i-k|}
\sum_{j=m_N}^{N-1}
\left|
A\left(y_i,\eta_j\right)
A\left(y_k,\eta_j\right)
-
A\left(y_i,\eta_{j+1}\right)
A\left(y_k,\eta_{j+1}\right)
\right|
\le
C\frac{N}{|i-k|}.
\end{eqnarray*}
Hence, passing to the real part as in~\eqref{realpart}, the first sum in~\eqref{eq:gram-products} is bounded by
\[
\frac{C}{|i-k|}.
\]
Combining the previous estimate with~\eqref{realpart} and~\eqref{amslappppa}, we obtain
\[
\left|[G_N]_{i,k}\right|
\le
\frac{C}{|i-k|}
+
\frac{C}{N},
\quad i\ne k.
\]
Since $|i-k|\ge1$, we have
\[
\frac{1}{|i-k|}
\le
\frac{2}{1+|i-k|}.
\]
Therefore, after changing the constant $C$ if necessary, we obtain
\begin{equation}\label{eq:offdiag-bound}
\left|[G_N]_{i,k}\right|
\le C\left(
\frac{1}{1+|i-k|}
+
\frac{1}{N}\right),
\quad i\ne k,
\end{equation}
where $C$ is independent of $N,i,k$.

For $R\ge1$, we define
\[
\left[G_N^{(R)}\right]_{i,k}
=
\begin{cases}
\left[G_N\right]_{i,k}, & |i-k|\le R,\\
0, & |i-k|>R.
\end{cases}
\]
For each fixed $r\ge0$, we have shown that
\begin{equation}\label{diagGN}
    \left[G_N\right]_{i,i-r}
=
c_r\left(y_i\right)+O\left(N^{-1}\right),
\end{equation}
uniformly for $y_i,y_{i-r}\in I$. Since $G_N=Q_NQ_N^\top$ is symmetric, we also have
\[
\left[G_N\right]_{i,i+r}
=
\left[G_N\right]_{i+r,i}
=
c_r\left(y_{i+r}\right)+O\left(N^{-1}\right),
\]
uniformly for $y_i,y_{i+r}\in I$. 
For $r\ge1$, let $Z_N^r$ be the matrix defined by
\[
\left[Z_N^r\right]_{i,k}
=
\begin{cases}
1, & i=k+r,\\
0, & \text{otherwise}.
\end{cases}
\]
Moreover, let
\[
D_N\left(c_r\right)
=
\operatorname{diag}
\left[
c_r\left(y_1\right),
\ldots,
c_r\left(y_N\right)
\right].
\]
We then consider
\[
L_N^{(R)}
=
D_N\left(c_0\right)
+
\sum_{r=1}^{R}
\left[
D_N\left(c_r\right)Z_N^r
+
\left(Z_N^r\right)^\top D_N\left(c_r\right)
\right].
\]
We notice that, for $r\ge1$,
\[
\left[D_N\left(c_r\right)Z_N^r\right]_{i,i-r}
=
c_r\left(y_i\right),
\]
while
\[
\left[\left(Z_N^r\right)^\top D_N\left(c_r\right)\right]_{i,i+r}
=
c_r\left(y_{i+r}\right).
\]
Hence, like $G_N^{(R)}$, the matrix $L_N^{(R)}$ has zero entries
whenever $|i-k|>R$. More precisely,
\[
\left[L_N^{(R)}\right]_{i,k}
=
\begin{cases}
c_0\left(y_i\right), & k=i,\\[1mm]
c_r\left(y_i\right), & k=i-r,\quad 1\le r\le R,\\[1mm]
c_r\left(y_k\right), & k=i+r,\quad 1\le r\le R,\\[1mm]
0, & |i-k|>R.
\end{cases}
\]
We now estimate the difference between $G_N^{(R)}$ and $L_N^{(R)}$. If
$|i-k|>R$, then
\[
\left[G_N^{(R)}-L_N^{(R)}\right]_{i,k}=0.
\]
If $|i-k|\le R$ and $y_i,y_k\in I$, then, by~\eqref{diagGN} and the
symmetry of $G_N$, we have
\[
\left[G_N^{(R)}-L_N^{(R)}\right]_{i,k}
=
O\left(N^{-1}\right).
\]
If $|i-k|\le R$ and $y_i,y_k\notin I$, then both entries are zero. Finally, if $|i-k|\le R$ and exactly one of $y_i$ and $y_k$
belongs to $I$, then the corresponding entry of $G_N^{(R)}$
is zero, and the corresponding entry of $L_N^{(R)}$ is
uniformly bounded. Since $R$ is fixed, the number
of such entries is bounded independently of $N$. Therefore,
\begin{equation}\label{nsappaaaa}
    \frac{1}{N}
\left\|
G_N^{(R)}-L_N^{(R)}
\right\|_F^2
\le
\frac{C}{N}
\left(
\frac{1}{N}+1
\right)
\longrightarrow0,
\quad N\to\infty.
\end{equation}
Furthermore, denoting by $r=|i-k|$, from~\eqref{eq:offdiag-bound} we have
\[
\left|[G_N]_{i,k}\right|
\le
C\left(
\frac{1}{1+r}
+
\frac{1}{N}
\right).
\]
For each $r=R+1,\ldots,N-1$, there are $2(N-r)$ entries such that
$|i-k|=r$. Therefore, using~\eqref{eq:offdiag-bound} and
\[
(a+b)^2\le 2\left(a^2+b^2\right),
\]
we obtain
\begin{eqnarray*}
&&\frac{1}{N}
\left\|
G_N-G_N^{(R)}
\right\|_F^2
=
\frac{1}{N}
\sum_{|i-k|>R}
\left|[G_N]_{i,k}\right|^2
\\
&\le&
\frac{C}{N}
\sum_{r=R+1}^{N-1}
(N-r)
\left(
\frac{1}{1+r}
+
\frac{1}{N}
\right)^2
\le
\frac{C}{N}
\sum_{r=R+1}^{N-1}
(N-r)
\left(
\frac{1}{(1+r)^2}
+
\frac{1}{N^2}
\right)
\\
&=&
C
\sum_{r=R+1}^{N-1}
\frac{N-r}{N(1+r)^2}
+
\frac{C}{N^3}
\sum_{r=R+1}^{N-1}
(N-r)
\le
C
\sum_{r=R+1}^{N-1}
\frac{1}{(1+r)^2}
+
\frac{C}{N^3}
\sum_{r=1}^{N-1}
(N-r)
\\
&=&
C
\sum_{r=R+1}^{N-1}
\frac{1}{(1+r)^2}
+
\frac{C}{N^3}\frac{N(N-1)}{2}
\le
C
\sum_{r=R+1}^{N-1}
\frac{1}{r^2}
+
\frac{C}{N}.
\end{eqnarray*}
Hence,
\begin{equation}\label{eq:tail-F}
\limsup_{N\to\infty}
\frac{1}{N}
\left\|
G_N-G_N^{(R)}
\right\|_F^2
\le
C
\sum_{r=R+1}^{\infty}
\frac{1}{r^2}.
\end{equation}
Since
\[
G_N-L_N^{(R)}
=
\left(G_N-G_N^{(R)}\right)
+
\left(G_N^{(R)}-L_N^{(R)}\right),
\]
using~\eqref{nsappaaaa} and~\eqref{eq:tail-F}, we obtain
\begin{equation}\label{saoqpwnesw}
    \limsup_{N\to\infty}
\frac{1}{N}
\left\|
G_N-L_N^{(R)}
\right\|_F^2
\le
C
\sum_{r=R+1}^{\infty}
\frac{1}{r^2}.
\end{equation}
Since
\[
\lim_{R\to\infty}\sum_{r=R+1}^{\infty}
\frac{1}{r^2}=0,
\]
it follows that
\[
\lim_{R\to\infty}
\limsup_{N\to\infty}
\frac{1}{N}
\left\|
G_N-L_N^{(R)}
\right\|_F^2
=
0.
\]
Therefore, $\left\{L_N^{(R)}\right\}_N$ is an approximating class for
$\left\{G_N\right\}_N$ as $R\to\infty$; see~\cite{Barbarino:2022:RGLT}.
For any fixed $R$, we have
\[
\left\{D_N\left(c_r\right)\right\}_N
\sim_{\mathrm{GLT}}
c_r(y),
\quad
\left\{Z_N^r\right\}_N
\sim_{\mathrm{GLT}}
e^{\mathrm{i}r\theta}.
\]
Hence, by the GLT algebra, we have
\[
\left\{L_N^{(R)}\right\}_N
\sim_{\mathrm{GLT}}
\kappa_R(y,\theta),
\]
where
\[
\kappa_R(y,\theta)
=
c_0(y)
+
\sum_{r=1}^{R}
c_r(y)
\left(
e^{\mathrm{i}r\theta}
+
e^{-\mathrm{i}r\theta}
\right)
=
c_0(y)
+
2\sum_{r=1}^{R}
c_r(y)\cos(r\theta).
\]
We now identify the limit of $\kappa_R$. Fix $y\in I$ and $r\in\mathbb{Z}$. From~\eqref{eq:kappa-general}, we obtain
\begin{eqnarray*}
\frac{1}{2\pi}
\int_{-\pi}^{\pi}
\kappa_I(y,\theta)e^{-\mathrm{i}r\theta}
\dd\theta
&=&
\frac{1}{4q(y)}
\int_{\eta_0q(y)\le |t|\le q(y)}
\left[A\left(
y,\frac{|t|}{q(y)}
\right)\right]^2
e^{-\mathrm{i}rt}
\dd t
\\
&=&
\frac{1}{4}
\int_{\eta_0\le |s|\le1}
\left[A\left(y,|s|\right)\right]^2
e^{-\mathrm{i}rq(y)s}
\dd s
\\
&=&
\frac{1}{2}
\int_{\eta_0}^{1}
\left[A\left(y,\eta\right)\right]^2
\cos\left(rq(y)\eta\right)
\dd\eta
\\
&=&
c_r(y).
\end{eqnarray*}
For $y\notin I$, we have
\[
\kappa_I(y,\theta)=0,
\quad
c_r(y)=0.
\]
Therefore, for any $y\in[0,1]$, $c_r(y)$ is the $r$-th Fourier coefficient of
$\kappa_I(y,\cdot)$. Hence, $\kappa_R(y,\theta)$ is the $R$-th Fourier partial sum of $\kappa_I(y,\theta)$ with respect to
$\theta$. Since $q$ is continuous and positive on the compact interval $I$, there exist
constants $q_0,q_1>0$ such that
\[
q_0\le q(y)\le q_1,
\quad y\in I.
\]
Moreover, $A$ is bounded on $I\times\left[\eta_0,1\right]$. From~\eqref{eq:kappa-general},
the number of terms in the sum defining $\kappa_I(y,\theta)$ is bounded
independently of $y$ and $\theta$. Hence $\kappa_I$ is bounded on
$[0,1]\times[-\pi,\pi]$, and therefore
\[
\kappa_I
\in
L^2\left([0,1]\times[-\pi,\pi]\right).
\]
Since $\kappa_R$ is the $R$-th Fourier partial sum of $\kappa_I$ with respect to $\theta$, we infer
\[
\kappa_R
\longrightarrow
\kappa_I
\quad\hbox{in }
L^2\left([0,1]\times[-\pi,\pi]\right),
\quad R\to\infty,
\]
and hence also in measure. Moreover,
\[
\left\{L_N^{(R)}\right\}_N
\sim_{\mathrm{GLT}}
\kappa_R,
\]
and, using~\eqref{saoqpwnesw}, we have that $\left\{L_N^{(R)}\right\}_N$ is an approximating class of sequences
(a.c.s.) for $\left\{G_N\right\}_N$ as $R\to\infty$. Therefore, by the
a.c.s. closure property of the GLT class, we have
\[
\left\{G_N\right\}_N
\sim_{\mathrm{GLT}}
\kappa_I.
\]
This proves~\eqref{eq:gram-general-glt}.
\end{proof}

We now apply Proposition~\ref{prop:gram-general} to determine the GLT symbol of the Gram sequence associated with the Jacobi weighted histopolation matrices $\left\{H_N\right\}_N$ defined in~\eqref{eq:HN}. In view of Lemma~\ref{lem:entry-asymptotic}, we first restrict the indices to
\[
\varepsilon\le y_i\le1-\varepsilon,
\qquad
\eta_j\ge\varepsilon,
\]
with $\varepsilon\in(0,1/2)$. Let $D_{N,\varepsilon}$ be the diagonal matrix whose $i$-th diagonal entry is one if
\[
\varepsilon\le y_i\le1-\varepsilon,
\]
and zero otherwise. Analogously, let $C_{N,\varepsilon}$ be the diagonal matrix whose $j$-th diagonal entry is one if
\[
\eta_j\ge\varepsilon,
\]
and zero otherwise. We set
\begin{equation*}
H_N^{(\varepsilon)}
=
D_{N,\varepsilon}H_NC_{N,\varepsilon}.
\end{equation*}
Equivalently,
\[
\left[H_N^{(\varepsilon)}\right]_{i,j}
=
\begin{cases}
\left[H_N\right]_{i,j},
&
\varepsilon\le y_i\le1-\varepsilon
\quad\text{and}\quad
\eta_j\ge\varepsilon,
\\[1ex]
0,
&
\text{otherwise}.
\end{cases}
\]
Then, by adding and subtracting $D_{N,\varepsilon}H_N$, we have
\begin{eqnarray}
H_N-H_N^{(\varepsilon)}
&=&
H_N-D_{N,\varepsilon}H_NC_{N,\varepsilon}\notag\\
&=&
H_N-D_{N,\varepsilon}H_N
+
D_{N,\varepsilon}H_N
-
D_{N,\varepsilon}H_NC_{N,\varepsilon}
\notag\\
&=&
\left(I-D_{N,\varepsilon}\right)H_N
+
D_{N,\varepsilon}H_N\left(I-C_{N,\varepsilon}\right).\label{decs}
\end{eqnarray}
The first term contains the rows of $H_N$ corresponding to
\[
y_i\notin[\varepsilon,1-\varepsilon],
\]
while the second term contains, among the remaining rows, the columns corresponding to
\[
\eta_j<\varepsilon.
\]
By Remark~\ref{rem:index-count}, the matrix $I-D_{N,\varepsilon}$ has
\[
2\left(
\left\lceil\varepsilon N+\frac12\right\rceil-1
\right)
\]
nonzero diagonal entries, so that
\[
\operatorname{rank}\left(I-D_{N,\varepsilon}\right)
=
2\left(
\left\lceil\varepsilon N+\frac12\right\rceil-1
\right)\le
2\left\lceil\varepsilon N\right\rceil.
\]
Similarly, we deduce that
\[
\operatorname{rank}\left(I-C_{N,\varepsilon}\right)
=
\left\lceil\varepsilon N\right\rceil-1\le
\left\lceil\varepsilon N\right\rceil.
\]
Then, by~\eqref{decs}, it follows that
\begin{equation}\label{eq:rank-cut}
\operatorname{rank}\left(H_N-H_N^{(\varepsilon)}\right)
\le
2\left\lceil\varepsilon N\right\rceil+\left\lceil\varepsilon N\right\rceil=3\left\lceil\varepsilon N\right\rceil.
\end{equation}
We now consider the matrix $Q_N^{(\varepsilon)}\in\RR^{N\times N}$ defined in~\eqref{eq:QN-general} corresponding to
\[
I=[\varepsilon,1-\varepsilon],
\quad
\eta_0=\varepsilon,
\quad
A=A_{\alpha,\beta,g},
\quad
\phi=\phi_{\alpha,\beta}.
\]
By Lemma~\ref{lem:entry-asymptotic}, we have
\[
\left[H_N^{(\varepsilon)}-Q_N^{(\varepsilon)}\right]_{i,j}
=
O\left(N^{-3/2}\right),
\]
uniformly with respect to $i$ and $j$, while both entries are zero outside this region. Therefore, we have
\begin{equation*}
\left\|H_N^{(\varepsilon)}-Q_N^{(\varepsilon)}\right\|_F^2
=
\sum_{i=1}^N\sum_{j=1}^N
\left|
\left[H_N^{(\varepsilon)}-Q_N^{(\varepsilon)}\right]_{i,j}
\right|^2
=
O\left(N^{-1}\right).
\end{equation*}
Hence,
\begin{equation}\label{eq:F-small}
\left\|H_N^{(\varepsilon)}-Q_N^{(\varepsilon)}\right\|_F
=
O\left(N^{-1/2}\right).
\end{equation}
By Proposition~\ref{prop:gram-general}, we have
\begin{equation}\label{eq:Qeps-glt}
\left\{
Q_N^{(\varepsilon)}
\left(Q_N^{(\varepsilon)}\right)^\top
\right\}_N
\sim_{\mathrm{GLT}}
\kappa_\varepsilon,
\end{equation}
where, for $y\in[\varepsilon,1-\varepsilon]$,
\begin{equation}\label{eq:kappa-eps-general}
\kappa_\varepsilon(y,\theta)
=
\frac{\pi}{2q(y)}
\sum_{\substack{k\in\mathbb Z\\
\varepsilon q(y)\le
\left|\theta+2k\pi\right|
\le q(y)}}
\left[A_{\alpha,\beta,g}
\left(
y,
\frac{\left|\theta+2k\pi\right|}{q(y)}
\right)\right]^2,
\end{equation}
and
\[
\kappa_\varepsilon(y,\theta)=0,
\quad
y\notin[\varepsilon,1-\varepsilon].
\]
Substituting~\eqref{eq:Aamp} into~\eqref{eq:kappa-eps-general}, we obtain
\begin{equation*}
\kappa_\varepsilon(y,\theta)
=
2^{2\alpha+2\beta-1}
\left(\sin\left(\frac{\vartheta(y)}{2}\right)\right)^{2\alpha-3}
\left(\cos\left(\frac{\vartheta(y)}{2}\right)\right)^{2\beta-3}
\sum_{\substack{k\in\mathbb Z\\
\varepsilon q(y)\le
\left|\theta+2k\pi\right|
\le q(y)}}
\frac{
\left[\chi\left(\dfrac{|\theta+2k\pi|}{2}\right)\right]^2
}{
\left|\theta+2k\pi\right|
}.
\end{equation*}
We now let $\varepsilon\to0$. For $y\in(0,1)$ and $\varepsilon$ sufficiently small, we have
\[
y\in[\varepsilon,1-\varepsilon].
\]
Moreover, the condition
\[
\varepsilon q(y)\le|\theta+2k\pi|\le q(y)
\]
reduces, as $\varepsilon\to0$, to
\[
0<|\theta+2k\pi|\le q(y).
\]
For fixed $y\in(0,1)$, the equation
\[
|\theta+2k\pi|=q(y),
\qquad k\in\mathbb Z,
\]
can hold only for finitely many values of $\theta\in[-\pi,\pi]$, and therefore only on a set of measure zero. Therefore,
\begin{equation}\label{eq:kappa-eps-limit}
\kappa_\varepsilon(y,\theta)
\longrightarrow
\kappa_{\alpha,\beta,g}(y,\theta)
\end{equation}
for almost all $(y,\theta)\in(0,1)\times[-\pi,\pi]$ as $\varepsilon\to0$.

We are now ready to prove Theorem~\ref{thm:main}.

\begin{proof}[Proof of Theorem~\ref{thm:main}]
We first prove the statement for the sequence $\left\{H_NH_N^\top\right\}_N$. For fixed $\varepsilon\in(0,1/2)$, by~\eqref{eq:F-small}, we can write
\begin{equation}\label{HNa}
    H_N^{(\varepsilon)}
=
Q_N^{(\varepsilon)}
+
E_N^{(\varepsilon)},
\end{equation}
where
\begin{equation}\label{estimates1}
\left\|E_N^{(\varepsilon)}\right\|_F
=
O\left(N^{-1/2}\right).
\end{equation}
Moreover, the function $A_{\alpha,\beta,g}$ defined in~\eqref{eq:Aamp} is continuous on
\[
[\varepsilon,1-\varepsilon]\times[\varepsilon,1],
\]
and therefore it is bounded on this set. Hence, by~\eqref{eq:QN-general}, the entries of $Q_N^{(\varepsilon)}$ are
$O(N^{-1/2})$, uniformly with respect to $i$ and $j$. Therefore,
\[
\left\|Q_N^{(\varepsilon)}\right\|_F^2
=
\sum_{i=1}^N\sum_{j=1}^N
\left|
\left[Q_N^{(\varepsilon)}\right]_{i,j}
\right|^2
=
O(N),
\]
and hence
\begin{equation}\label{estimates2}
\left\|Q_N^{(\varepsilon)}\right\|_F
=
O\left(\sqrt N\right).
\end{equation}
Using~\eqref{HNa}, we have
\begin{eqnarray*}
H_N^{(\varepsilon)}
\left(H_N^{(\varepsilon)}\right)^\top
-
Q_N^{(\varepsilon)}
\left(Q_N^{(\varepsilon)}\right)^\top&=&\left(Q_N^{(\varepsilon)}
+
E_N^{(\varepsilon)}\right)\left(Q_N^{(\varepsilon)}
+
E_N^{(\varepsilon)}\right)^{\top}-Q_N^{(\varepsilon)}
\left(Q_N^{(\varepsilon)}\right)^\top
\\
&=&
E_N^{(\varepsilon)}
\left(Q_N^{(\varepsilon)}\right)^\top
+
Q_N^{(\varepsilon)}
\left(E_N^{(\varepsilon)}\right)^\top
+
E_N^{(\varepsilon)}
\left(E_N^{(\varepsilon)}\right)^\top.
\end{eqnarray*}
Then, using~\eqref{estimates1} and~\eqref{estimates2}, there exist
constants $c_\varepsilon,C_\varepsilon>0$ such that, for $N$ sufficiently large,
\[
\left\|E_N^{(\varepsilon)}\right\|_F
\le
\frac{c_\varepsilon}{\sqrt N},
\qquad
\left\|Q_N^{(\varepsilon)}\right\|_F
\le
C_\varepsilon\sqrt N.
\]
Using the triangle inequality and the inequality
\[
\|AB\|_F\le\|A\|_F\|B\|_F,
\]
we obtain
\begin{equation*}
\left\|
H_N^{(\varepsilon)}
\left(H_N^{(\varepsilon)}\right)^\top
-
Q_N^{(\varepsilon)}
\left(Q_N^{(\varepsilon)}\right)^\top
\right\|_F
\le
2
\left\|E_N^{(\varepsilon)}\right\|_F
\left\|Q_N^{(\varepsilon)}\right\|_F
+
\left\|E_N^{(\varepsilon)}\right\|_F^2
\le
2c_\varepsilon C_\varepsilon
+
\frac{c_\varepsilon^2}{N}.
\end{equation*}
Therefore,
\[
\frac{1}{N}
\left\|
H_N^{(\varepsilon)}
\left(H_N^{(\varepsilon)}\right)^\top
-
Q_N^{(\varepsilon)}
\left(Q_N^{(\varepsilon)}\right)^\top
\right\|_F^2
\le
\frac{1}{N}
\left(
2c_\varepsilon C_\varepsilon
+
\frac{c_\varepsilon^2}{N}
\right)^2
\longrightarrow0,
\quad N\to\infty.
\]
Hence, the sequence of differences is zero-distributed. Therefore, by~\eqref{eq:Qeps-glt} and the GLT properties, we have
\begin{equation}\label{eq:Heps-gram}
\left\{
H_N^{(\varepsilon)}
\left(H_N^{(\varepsilon)}\right)^\top
\right\}_N
\sim_{\mathrm{GLT}}
\kappa_\varepsilon.
\end{equation}
We next compare the truncated Gram matrices with $H_NH_N^\top$. To this end, adding and subtracting $H_N^{(\varepsilon)}H_N^\top$, we obtain
\begin{eqnarray*}
H_NH_N^\top
-
H_N^{(\varepsilon)}
\left(H_N^{(\varepsilon)}\right)^\top
&=&
H_NH_N^\top
-
H_N^{(\varepsilon)}H_N^\top
+
H_N^{(\varepsilon)}H_N^\top
-
H_N^{(\varepsilon)}
\left(H_N^{(\varepsilon)}\right)^\top
\\
&=&
\left(H_N-H_N^{(\varepsilon)}\right)H_N^\top
+
H_N^{(\varepsilon)}
\left(H_N-H_N^{(\varepsilon)}\right)^\top.
\end{eqnarray*}
Then, we get
\begin{eqnarray*}
\operatorname{rank}\left(
H_NH_N^\top
-
H_N^{(\varepsilon)}
\left(H_N^{(\varepsilon)}\right)^\top
\right)
&\le&
\operatorname{rank}\left(
\left(H_N-H_N^{(\varepsilon)}\right)H_N^\top
\right)+
\operatorname{rank}\left(
H_N^{(\varepsilon)}
\left(H_N-H_N^{(\varepsilon)}\right)^\top
\right)
\\
&\le&
2\operatorname{rank}\left(
H_N-H_N^{(\varepsilon)}
\right).
\end{eqnarray*}
Hence, by~\eqref{eq:rank-cut}, we have
\begin{equation}\label{eq:gram-rank-cut}
\operatorname{rank}\left(
H_NH_N^\top
-
H_N^{(\varepsilon)}
\left(H_N^{(\varepsilon)}\right)^\top
\right)
\le
6\left\lceil\varepsilon N\right\rceil.
\end{equation}
Let $\left\{\varepsilon_m\right\}_m$ be a positive sequence such that
\[
\varepsilon_m\longrightarrow0,
\quad m\to\infty.
\]
From~\eqref{eq:gram-rank-cut}, for $N$ sufficiently large, we have
\[
\operatorname{rank}\left(
H_NH_N^\top
-
H_N^{(\varepsilon_m)}
\left(H_N^{(\varepsilon_m)}\right)^\top
\right)
\le
6\left\lceil\varepsilon_m N\right\rceil
\le
12\varepsilon_m N.
\]
Since
\[
12\varepsilon_m\longrightarrow0,
\quad m\to\infty,
\]
it follows from the definition of an approximating class of sequences~\cite[Definition~1]{Garoni:2017:TFA} that
\[
\left\{
H_N^{(\varepsilon_m)}
\left(H_N^{(\varepsilon_m)}\right)^\top
\right\}_N
\]
is an a.c.s. for
\[
\left\{H_NH_N^\top\right\}_N.
\]
Moreover, since  by~\eqref{eq:kappa-eps-limit},
\[
\kappa_{\varepsilon_m}\to\kappa_{\alpha,\beta,g},
\quad \text{almost everywhere},
\]
and hence in measure on $(0,1)\times[-\pi,\pi]$, using~\eqref{eq:Heps-gram} and the a.c.s. closure property of the GLT class, we obtain
\[
\left\{H_NH_N^\top\right\}_N
\sim_{\mathrm{GLT}}
\kappa_{\alpha,\beta,g},
\]
which proves~\eqref{eq:gram-glt-main}.

It remains to prove~\eqref{eq:main-sv}. Since $H_NH_N^\top$ is symmetric and positive semidefinite, by~\eqref{eq:gram-glt-main} and GLT~6 in~\cite{Guessab:2026:SJW}, applied to the continuous function
\[
f(z)=\sqrt{|z|},
\]
we obtain
\[
\left\{\left(H_NH_N^\top\right)^{1/2}\right\}_N
\sim_{\mathrm{GLT}}
\sqrt{\kappa_{\alpha,\beta,g}},
\]
where we have used that $\kappa_{\alpha,\beta,g}\ge0$ almost everywhere. Hence, by GLT~1 in~\cite{Guessab:2026:SJW}, for any $F\in C_c(\RR)$,
\begin{equation}\label{amsanananasasa}
    \lim_{N\to\infty}
\frac{1}{N}
\sum_{j=1}^N
F\left(
\lambda_j\left(
\left(H_NH_N^\top\right)^{1/2}
\right)
\right)
=
\frac{1}{2\pi}
\int_0^1
\int_{-\pi}^{\pi}
F\left(
\sqrt{\kappa_{\alpha,\beta,g}(y,\theta)}
\right)
\dd\theta\dd y.
\end{equation}
Moreover, since $\left(H_NH_N^\top\right)^{1/2}$ is symmetric and positive semidefinite, we obviously have
$\left[ \lambda_j\left(\left(H_NH_N^\top\right)^{1/2}\right)
\right]^2
=
\left[\sigma_j\left(H_N\right)\right]^2$
and since singular values are nonnegative, it follows that
\[
\lambda_j\left(\left(H_NH_N^\top\right)^{1/2}\right)
=
\sigma_j\left(H_N\right),
\quad j=1,\ldots,N.
\]
Hence, by~\eqref{amsanananasasa}, we have
\[
\lim_{N\to\infty}
\frac{1}{N}
\sum_{j=1}^N
F\left(\sigma_j\left(H_N\right)\right)
=
\frac{1}{2\pi}
\int_0^1
\int_{-\pi}^{\pi}
F\left(
\sqrt{\kappa_{\alpha,\beta,g}(y,\theta)}
\right)
\dd\theta\dd y.
\]
By the definition of singular value distribution, we infer the relation
\[
\left\{H_N\right\}_N
\sim_\sigma
\sqrt{\kappa_{\alpha,\beta,g}},
\]
which proves~\eqref{eq:main-sv}.
\end{proof}

\section{Further results}\label{sec:further}

The symbol obtained in the previous section contains information which cannot be recovered from the zero-distribution results for scaled matrices. For the uniform mesh~\eqref{eq:nodes}, corresponding to the choice
\begin{equation}\label{eq:uniform-g}
g(y)=2y-1,
\end{equation}
we have
\begin{equation}\label{eq:q-uniform}
q(y)=\frac{1}{\sqrt{y(1-y)}}.
\end{equation}
The following result is a direct consequence of Corollary~\ref{cor:zero-set}. 

\begin{corollary}\label{cor:uniform-zero}
For the uniform mesh corresponding to~\eqref{eq:uniform-g}, the normalized measure of the zero set is
\begin{equation}\label{eq:uniform-zero}
\mu_0
=
\sqrt{1-\frac{4}{\pi^2}}
-\frac{2}{\pi}
\arcsin\left(\sqrt{1-\frac{4}{\pi^2}}\right)
\simeq 0.21051366.
\end{equation}
\end{corollary}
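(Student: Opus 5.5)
The plan is to evaluate the integral~\eqref{mu0eq} directly, since Corollary~\ref{cor:zero-set} already identifies $\mu_0=|\mathcal Z|/(2\pi)$ with $\int_0^1(1-q(y)/\pi)_+\,\dd y$. For the uniform mesh~\eqref{eq:uniform-g}, the function $q$ is given explicitly by~\eqref{eq:q-uniform}. The argument therefore reduces to three steps: find where the positive part is active, integrate over that region, and simplify.

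First, I determine the support of the integrand. The condition $1-q(y)/\pi>0$ is equivalent to $\sqrt{y(1-y)}>1/\pi$, that is, $y(1-y)>1/\pi^2$, or $y^2-y+1/\pi^2<0$. Since $4/\pi^2<1$, this quadratic has two real roots. Setting
\[
s=\sqrt{1-\frac{4}{\pi^2}}\in(0,1),
\]
the roots are $y_\pm=(1\pm s)/2$. Hence the integrand equals $1-q(y)/\pi$ on $(y_-,y_+)$ and vanishes elsewhere in $(0,1)$. This gives
\[
\mu_0=(y_+-y_-)-\frac1\pi\int_{y_-}^{y_+}\frac{\dd y}{\sqrt{y(1-y)}}
= s-\frac1\pi\int_{y_-}^{y_+}\frac{\dd y}{\sqrt{y(1-y)}}.
\]

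Next, I evaluate the remaining integral with the substitution $y=(1+t)/2$. Under this change of variable, $y(1-y)=(1-t^2)/4$ and $\dd y=\dd t/2$, so the integrand becomes $\dd t/\sqrt{1-t^2}$ with $t\in[-s,s]$. Therefore
\[
\int_{y_-}^{y_+}\frac{\dd y}{\sqrt{y(1-y)}}=\int_{-s}^{s}\frac{\dd t}{\sqrt{1-t^2}}=2\arcsin s.
\]
Substituting back yields exactly~\eqref{eq:uniform-zero}, namely $\mu_0=s-\tfrac{2}{\pi}\arcsin s$.

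Finally, I check the numerical value. We have $4/\pi^2\approx0.405285$, so $s\approx0.771178$ and $\arcsin s\approx0.88068$. This gives $\mu_0\approx0.771178-0.560658\approx0.21051$, in agreement with the stated value.

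None of these steps is a real obstacle. The only point needing care is to confirm that the positivity region is a single interval symmetric about $1/2$. This follows because $q$ is symmetric about $1/2$, attains its minimum $2<\pi$ there, and blows up at the endpoints $0$ and $1$. Consequently the positive part $(\cdot)_+$ cuts the domain exactly at $y_\pm$, and no additional pieces appear.
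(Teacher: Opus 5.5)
Your proposal is correct and follows the same route as the paper: restrict the integral in \eqref{eq:zero-mu} to the interval where $y(1-y)>1/\pi^2$ and integrate there. Your substitution $y=(1+t)/2$, which yields $2\arcsin s$, supplies the integral evaluation that the paper leaves implicit. One small slip is in the numerical check: in fact $\arcsin s\approx 0.880689$ (you wrote $0.88068$), so $\tfrac{2}{\pi}\arcsin s\approx 0.560664$ and $\mu_0\approx 0.210514$, whereas your own rounded figures give $0.771178-0.560658=0.210520$ rather than the $0.21051$ you state.
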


\begin{proof}
By~\eqref{eq:zero-mu} and~\eqref{eq:q-uniform},
\[
\mu_0
=
\int_0^1
\left(1-\frac{1}{\pi\sqrt{y(1-y)}}\right)_+\dd y.
\]
The function under the integral is positive exactly when
\[
y(1-y)>\frac{1}{\pi^2},
\]
that is,
\[
s_{-}
<
y
<
s_{+},
\]
where
\[
s_{-}=\frac{1-\sqrt{1-\frac{4}{\pi^2}}}{2}, \quad s_{+}=\frac{1+\sqrt{1-\frac{4}{\pi^2}}}{2}.
\]
Hence,
\begin{equation*}
\mu_0
=
\int_{s_{-}}^
{s_{+}}
\left(1-\frac{1}{\pi\sqrt{y(1-y)}}\right)\dd y
=
\sqrt{1-\frac{4}{\pi^2}}
-\frac{2}{\pi}
\arcsin\left(\sqrt{1-\frac{4}{\pi^2}}\right),
\end{equation*}
which gives~\eqref{eq:uniform-zero}.
\end{proof}

\begin{remark}\label{rem:mesh}
By Corollary~\ref{cor:zero-set}, the zero set of the symbol is completely determined by the function $q$, and therefore by the mesh. The Jacobi parameters $\alpha$ and $\beta$ appear only in the strictly positive factor in~\eqref{eq:kappa-main}. Hence, they affect the values of the symbol on its support.
\end{remark}

\begin{remark}
For the four classical Chebyshev weights, the factor multiplying the
summation in~\eqref{eq:kappa-main} can be written in a simple form.
Indeed, using the half-angle identities
\[
\left[\sin\left(\frac{\vartheta(y)}{2}\right)\right]^2
=
\frac{1-g(y)}{2},
\quad
\left[\cos\left(\frac{\vartheta(y)}{2}\right)\right]^2
=
\frac{1+g(y)}{2},
\]
we obtain
\[
2^{2\alpha+2\beta-1}
\left(\sin\left(\frac{\vartheta(y)}{2}\right)\right)^{2\alpha-3}
\left(\cos\left(\frac{\vartheta(y)}{2}\right)\right)^{2\beta-3}
=
\begin{cases}
\dfrac{2}{\left(1-(g(y))^2\right)^2},
& \alpha=\beta=-\dfrac12,\\[2ex]
\dfrac{8}{1-(g(y))^2},
& \alpha=\beta=\dfrac12,\\[2ex]
\dfrac{4}{(1-g(y))\left(1-(g(y))^2\right)},
& \alpha=-\dfrac12,\ \beta=\dfrac12,\\[2ex]
\dfrac{4}{(1+g(y))\left(1-(g(y))^2\right)},
& \alpha=\dfrac12,\ \beta=-\dfrac12.
\end{cases}
\]
These four cases correspond, respectively, to the Chebyshev weights
of the first, second, third, and fourth kind.
\end{remark}

\section{Interpretation of the theory and numerical results}\label{sec:numerical}

In this section, we discuss the consequences of the main GLT result
and present numerical experiments concerning the singular
value distribution and conditioning of the histopolation
matrices.

\subsection{Distribution of the histopolation matrices}

We present numerical experiments supporting the singular value distribution obtained in Theorem~\ref{thm:main}. More precisely, we compare the singular values of the unscaled histopolation matrices $H_N$ with the corresponding samples of the singular value symbol
\[
\sqrt{\kappa_{\alpha,\beta,g}(y,\theta)}.
\]
The singular values of $H_N$ are sorted in decreasing order and plotted against the normalized index. For the symbol, we sample $\sqrt{\kappa_{\alpha,\beta,g}(y,\theta)}$ on a uniform tensor grid in $(y,\theta)$, sort the resulting values in decreasing order, and plot the corresponding nonincreasing rearrangement on $[0,1]$, see~\cite{Barbarino:2022:CAM}. All computations are performed in MATLAB. The entries of the histopolation matrices are computed by Gauss--Legendre quadrature with $105$ quadrature points on each cell. The singular values of the resulting dense matrices are computed directly using the MATLAB command \texttt{svd}. We notice that the smallest computed singular values may be affected
by roundoff errors and do not necessarily provide accurate relative
approximations of the corresponding exact values. Therefore, the final
part of the spectrum is intended only as a qualitative indication of
the decay. For the approximation of the symbol, we use a uniform tensor grid with $800$ points in the variable $y$ and $1600$ points in the variable $\theta$.

We consider the symmetric case
\[
\alpha=\beta=2
\]
and the matrix sizes
\[
N=500,\quad N=1000,\quad N=2000.
\]
The nodes are chosen as in~\eqref{eq:nodes} using the three functions
\begin{equation}\label{eq:num-meshes}
g_1(y)=-1+2\frac{e^y-1}{e-1},
\qquad
g_2(y)=-1+2y^2,
\qquad
g_3(y)=-\cos(\pi y).
\end{equation}
The first two are the same families of nonuniform nodes considered in
the numerical experiments of~\cite{Guessab:2026:SJW}, while $g_3$
is the cosine mesh for which $\mu_0=0$.
The results are shown in Figure~\ref{fig:HN-symbol-numerical}.
For $g_1$ and $g_2$, the singular values of $H_N$ closely follow the
sorted samples of $\sqrt{\kappa_{\alpha,\beta,g}}$ and exhibit a sharp
decay in the final part of the spectrum. For $g_3$, the singular values
again follow the symbol, but no sharp decay occurs before the right
endpoint, since the zero set of the symbol has measure zero.

\begin{figure}[t]
\centering
\includegraphics[width=0.49\textwidth]{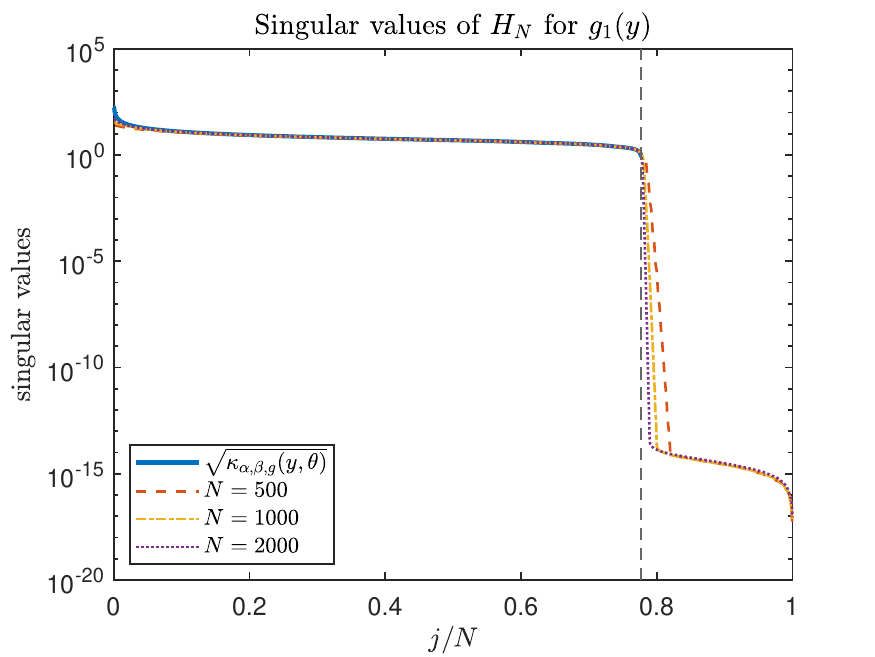}
\includegraphics[width=0.49\textwidth]{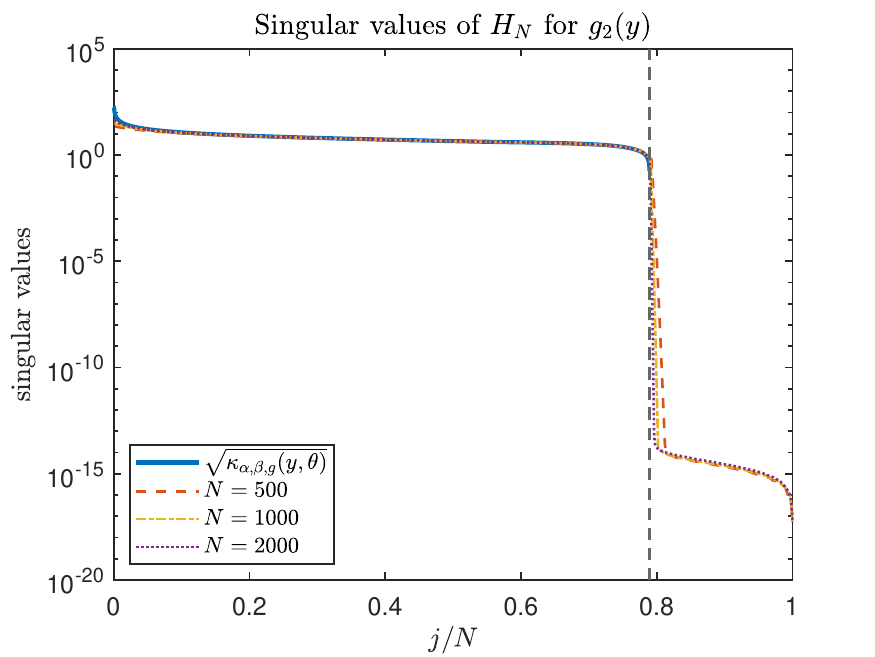}
\includegraphics[width=0.49\textwidth]{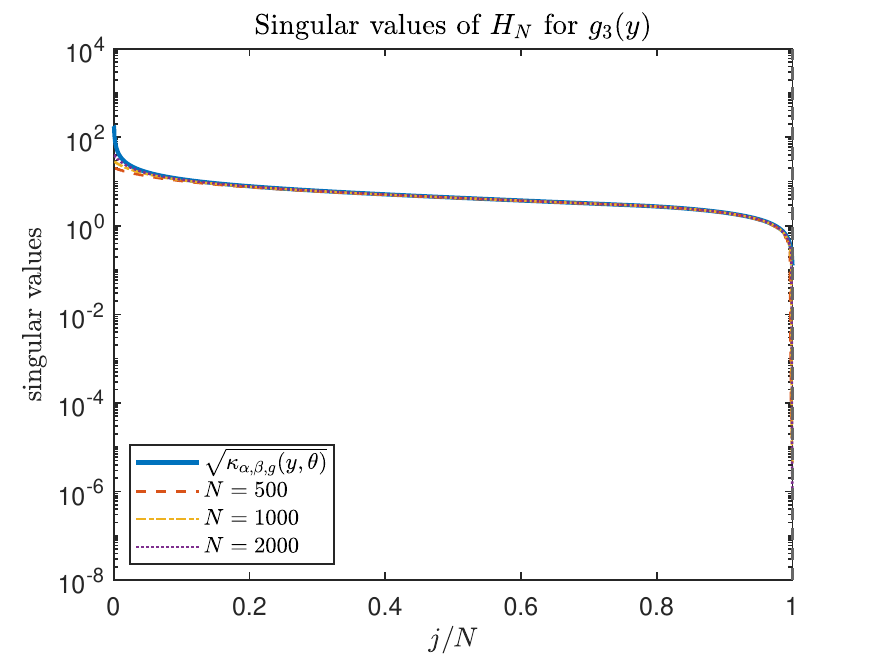}
\caption{Singular values of $H_N$ for $N=500,1000,2000$ compared with the
sorted samples of $\sqrt{\kappa_{\alpha,\beta,g}}$, with $\alpha=\beta=2$.
The nodes are generated by the mesh functions
$g_1(y)=-1+2(e^y-1)/(e-1)$ (top left),
$g_2(y)=-1+2y^2$ (top right), and
$g_3(y)=-\cos(\pi y)$ (bottom). }
\label{fig:HN-symbol-numerical}
\end{figure}
We denote by $\rho_{g_i}$, $i=1,2,3$, the normalized measure of the set
where the symbol is nonzero for the mesh generated by $g_i$. More
generally, by Corollary~\ref{cor:zero-set},
\begin{equation}\label{eq:rho-numerical}
\rho_{g_i}
=
1-\mu_0
=
\int_0^1
\min\left\{1,\frac{q(y)}{\pi}\right\}\dd y.
\end{equation}
For the exponential mesh $g_1$, numerical evaluation
of~\eqref{eq:rho-numerical} gives
\[
\rho_{g_1}\simeq0.776875,
\]
while for the quadratic mesh $g_2$ we obtain
\[
\rho_{g_2}\simeq0.789486.
\]
The vertical dashed lines in the first two plots of
Figure~\ref{fig:HN-symbol-numerical} indicate the corresponding values
of $\rho_{g_1}$ and $\rho_{g_2}$. In both cases, the sharp decay of the
singular values occurs near these points. For the cosine mesh $g_3$,
we have $q(y)=\pi$ for $y\in(0,1)$, and therefore
\[
\rho_{g_3}=1.
\]

Overall, the numerical results support the singular value distribution
obtained in Theorem~\ref{thm:main}. For $g_1$ and $g_2$, the sharp decay
of the singular values begins near the values $\rho_{g_1}$ and
$\rho_{g_2}$ predicted by Corollary~\ref{cor:zero-set}. For $g_3$, we
have $\rho_{g_3}=1$, and no analogous decay occurs inside the interval.

\subsection{Conditioning of the histopolation matrices}

As discussed above, when $\mu_0>0$, the singular value symbol vanishes
on a set of positive measure. Hence, as the matrix size increases, an
increasing number of singular values becomes very small. Since $H_N$ is
nonsingular for any $N$, this suggests a  deterioration of the
conditioning. A similar behavior is known in the Toeplitz setting, where a symbol
vanishing on a set of positive measure or having isolated zeros of exponential order necessarily leads to exponentially small
extreme eigenvalues and, consequently, to exponential growth of the
spectral condition number; see, e.g.,
\cite{Capizzano:2000:HBC,Tilli:2003:UBO}. This observation motivates us
to investigate numerically whether a similar exponential trend occurs
for the histopolation matrices. We point out that
Theorem~\ref{thm:main} gives the singular value distribution, but does
not provide the asymptotic behavior of the smallest singular value. We therefore consider the spectral condition number of the unscaled
histopolation matrices, namely
\[
\kappa_2\left(H_N\right)
=
\frac{\sigma_{\max}\left(H_N\right)}
{\sigma_{\min}\left(H_N\right)}.
\]
In analogy with the previous numerical experiment, we consider the
symmetric case $\alpha=\beta=2$ and the three meshes $g_1$, $g_2$, and
$g_3$ defined in~\eqref{eq:num-meshes}. For increasing values of $N$, we compute $\kappa_2\left(H_N\right)$ for all three meshes. Since the smallest singular values become very small as $N$
increases, we restrict the computation to the range of matrix
sizes shown in Figure~\ref{fig:conditioning}. For the values of $N$ considered, the numerical results for $g_1$ and
$g_2$ suggest an exponential growth of the form
\[
\kappa_2\left(H_N\right) \simeq C a^N,
\quad C>0,\quad a>1.
\]
To estimate the parameter $a$, we perform a least squares approximation
in logarithmic scale, that is,
\[
\log \kappa_2\left(H_N\right)
\simeq
\log C + N\log a.
\]
For the exponential mesh $g_1$, we obtain
\[
a_1 \simeq 2.0005,
\]
while for the quadratic mesh $g_2$ we get
\[
a_2 \simeq 3.9274.
\]
In both cases, the computed condition numbers follow closely the
corresponding exponential trend. These numerical experiments therefore indicate a rapid deterioration of the conditioning of the histopolation matrices as $N$ increases.

\begin{figure}[ht]
\centering
\includegraphics[width=0.49\textwidth]{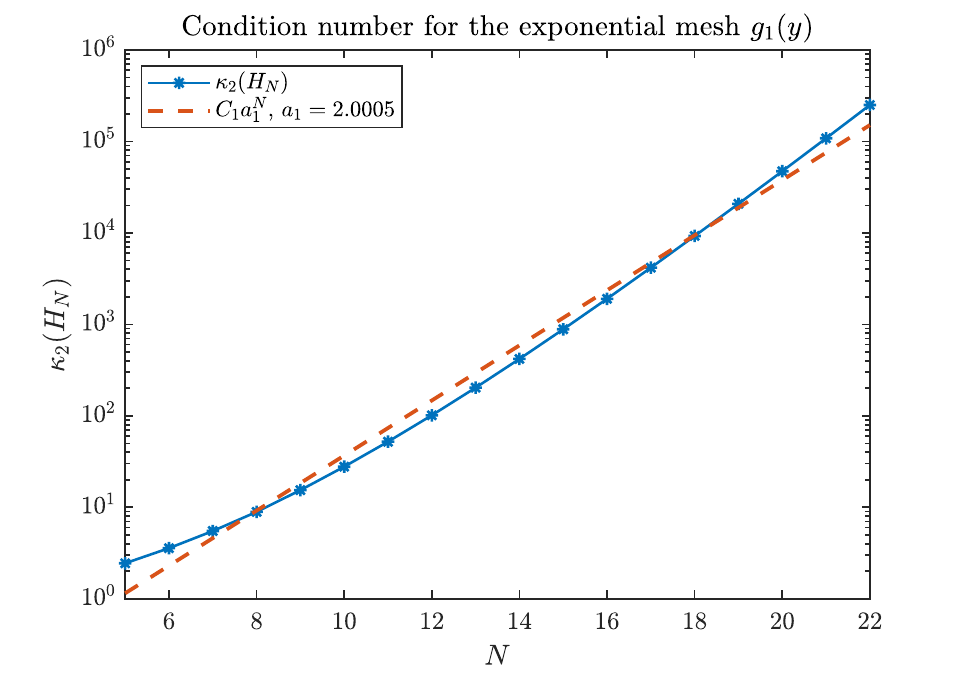}
\hfill
\includegraphics[width=0.49\textwidth]{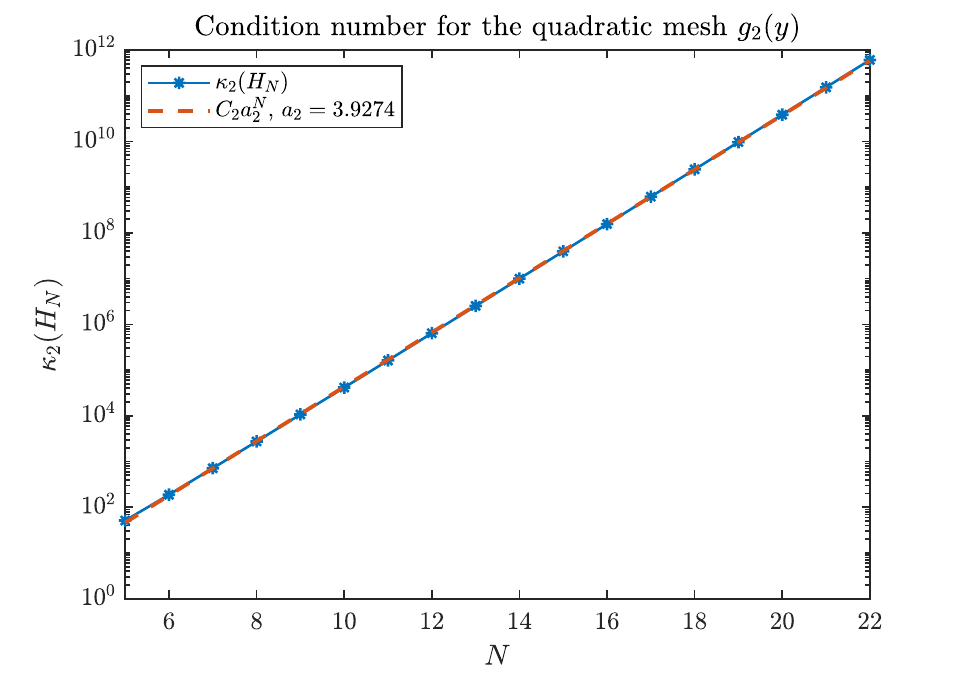}
\includegraphics[width=0.49\textwidth]{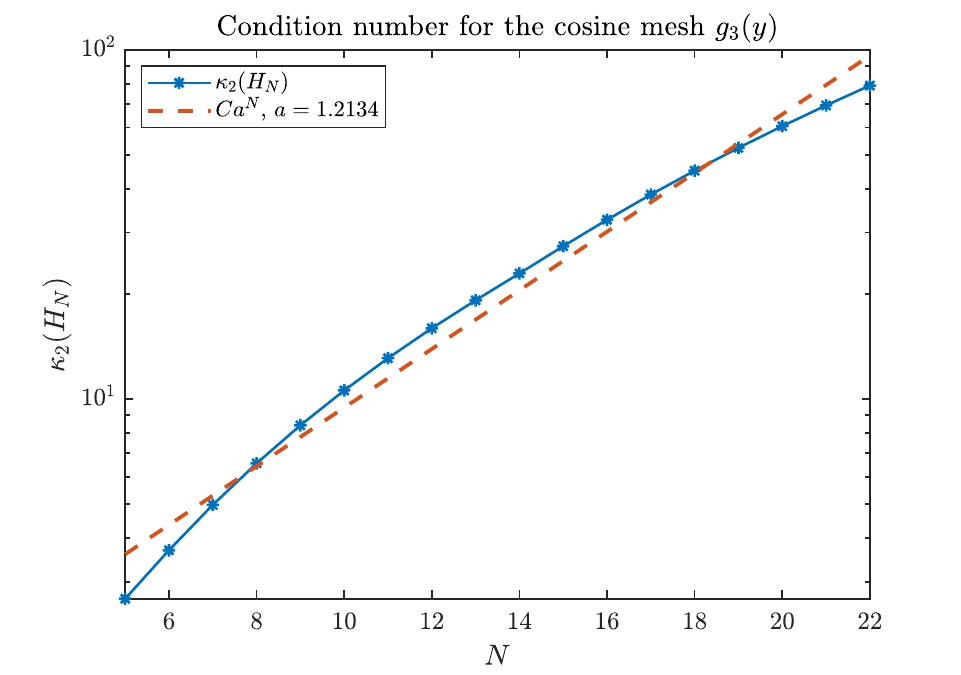}
\caption{Spectral condition number $\kappa_2\left(H_N\right)$ and corresponding exponential approximations $C_i a_i^N$, $i=1,2,3$, for $\alpha=\beta=2$.}
\label{fig:conditioning}
\end{figure}

For the cosine mesh $g_3$, for which $\mu_0=0$, the condition number
grows substantially more slowly than for $g_1$ and $g_2$. Moreover,
over the range of $N$ considered, the exponential model used for the
first two meshes provides a less accurate fit, while the numerical
data suggest instead an algebraic growth of the condition number, in line again with the purely Toeplitz setting \cite{Serra:1998:OTE,Bottcher:1998:OTC}.

\section{Conclusions}\label{sec:conclusions}

In this paper, we have determined the singular value distribution of the unscaled Jacobi weighted histopolation matrices. Starting from the classical asymptotic formula for Jacobi polynomials, we have derived an asymptotic representation of the weighted cell averages. This representation allowed us to analyze the Gram sequence associated with the histopolation matrices by means of GLT theory and to identify its symbol. As a consequence, we have obtained the singular value symbol of the histopolation matrices. The explicit form of the symbol also allows us to characterize its zero set. We have shown that this set depends only on the mesh, while the Jacobi parameters $\alpha$ and $\beta$ affect the values of the symbol on its support. The numerical experiments support the theoretical results and reproduce the sharp decay observed in the final part of the singular value spectrum. The normalized measure $\mu_0$ of the zero set also provides a direct
interpretation of the numerical results. Indeed, $1-\mu_0$ is the
normalized measure of the set where the singular value symbol is
nonzero, and the sharp decay observed for $g_1$ and $g_2$ begins
near this normalized index. More generally, when $\mu_0>0$,
Theorem~\ref{thm:main} implies the discrete asymptotic ill-posedness
of the histopolation problem. Since $\mu_0$ depends only on the mesh,
this conclusion holds for all Jacobi parameters $\alpha,\beta>-1$.

Finally, we think that the tools developed in the Toeplitz setting \cite{Serra:1998:OTE,Bottcher:1998:OTC,Capizzano:2000:HBC,Tilli:2003:UBO} could be the key for proving formally the exponential/algebraic growth of the conditioning, depending on the parameter $\mu_0$ and on the order of the zeros of the GLT symbol.

\section*{Declarations}

\noindent
\textbf{Corresponding author}\\
Federico Nudo, email federico.nudo@unical.it\\

\noindent
\textbf{Conflict of Interest}\\
The authors declare that they have no conflict of interest.\\

\noindent
\textbf{Funding statement}\\
This research was supported by the GNCS-INdAM 2026 projects
\emph{``Metodi polinomiali e kernel per l'approssimazione da dati discreti e integrali con software OS''} and \emph{``Metodi strutturati per il signal processing avanzato''}
(CUP E53C25002010001).
The work of F. Nudo was funded by the European Union -- NextGenerationEU under the Italian National Recovery and Resilience Plan (PNRR), Mission 4, Component 2, Investment 1.2
\lq\lq Finanziamento di progetti presentati da giovani ricercatori\rq\rq,
pursuant to MUR Decree No.~47/2025.\\

\noindent
\textbf{Author Contributions}\\
Allal Guessab, Federico Nudo and Stefano Serra-Capizzano contributed equally to the conception, development, and writing of this manuscript.
All authors have read and approved the final version of the paper. For this reason, the order of authorship is alphabetical.\\

\noindent
\textbf{Acknowledgement}\\
This research was carried out as part of RITA \textquotedblleft Research ITalian network on Approximation'' and as part of the UMI group \enquote{Teoria dell'Approssimazione e Applicazioni}.\\

\noindent
\textbf{Data Availability}\\
No external datasets were used in this study. The numerical data reported
in the paper were generated computationally as part of the numerical
experiments.

\bibliographystyle{elsarticle-num}
\bibliography{Jacobi_histopolation_symbol}

\end{document}